\newcommand{\crosses}[1]{%
	\ifcase#1\relax
	\or
	\rslash\or
	\rslash\mskip-5.5mu\rslash\or
	\rslash\mskip-5.5mu\rslash\mskip-5.5mu\rslash%
	\fi
}
\newcommand{\rslash}{\raisebox{.15ex}{/}}
\numberwithin{equation}{section}
\theoremstyle{plain}
\newtheorem{lemma}{Lemma}[section]
\newtheorem{proposition}[lemma]{Proposition}
\newtheorem{proposition/definition}[lemma]{Proposition/Definition}
\newtheorem{theorem}[lemma]{Theorem}
\newtheorem*{theorem*}{Theorem}
\theoremstyle{definition}
\newtheorem{remark}[lemma]{Remark}
\newtheorem{example}[lemma]{Example}
\DeclareRobustCommand{\SkipTocEntry}[5]{}
\title{Integrating Nijenhuis Structures}
\def\author@andify{%
	\nxandlist {\unskip ,\penalty-1 \space\ignorespaces}%
	{\unskip {} \@@and~}%
	{\unskip \penalty-2 \space \@@and~}%
}
\author[F.~Pugliese]{Fabrizio Pugliese}
\address{DipMat, Universit\`a degli Studi di Salerno, via Giovanni Paolo II n${}^{\circ}$123, 84084 Fisciano (SA), Italy.}
\email{\href{mailto:fpugliese@unisa.it}{fpugliese@unisa.it}}
\author[G.~Sparano]{Giovanni Sparano}
\address{DipMat, Universit\`a degli Studi di Salerno, via Giovanni Paolo II n${}^{\circ}$123, 84084 Fisciano (SA), Italy.}
\email{\href{mailto:sparano@unisa.it}{sparano@unisa.it}}
\author[L.~Vitagliano]{Luca Vitagliano}
\address{DipMat, Universit\`a degli Studi di Salerno, via Giovanni Paolo II n${}^{\circ}$123, 84084 Fisciano (SA), Italy.}
\email{\href{mailto:lvitagliano@unisa.it}{lvitagliano@unisa.it}}
\keywords{Lie groupoids, Lie algebroids, Nijenhuis structures, Multiplicative tensors}
\subjclass[2010]{22A22 (Primary), 
53D17, 
58A50
}
\begin{document}

\begin{abstract}
A Nijenhuis operator on a manifold $M$ is a $(1,1)$ tensor $\mathcal N$ whose Nijenhuis-torsion vanishes. A Nijenhuis operator $\mathcal N$ on $M$ determines a Lie algebroid structure $(TM)_{\mathcal N}$ on the tangent bundle $TM$. In this sense a Nijenhuis operator can be seen as an infinitesimal object. In this paper, we identify its \emph{global counterpart}. Namely, we show that when the Lie algebroid $(TM)_{\mathcal N}$ is integrable, then it integrates to a Lie groupoid equipped with appropriate additional structure responsible for $\mathcal N$, and viceversa, the Lie algebroid of a Lie groupoid equipped with such additional structure is of the type $(TM)_{\mathcal N}$ for some Nijenhuis operator $\mathcal N$. We illustrate our integration result in various examples.
\end{abstract}

\maketitle

\tableofcontents

\section{Introduction}

There is a natural \emph{integrability condition} that one can impose on a $(1,1)$ tensor $\mathcal N$ on a manifold $M$. Namely, $\mathcal N$ defines a skew-symmetric $(2, 1)$ tensor $T_{\mathcal N}$, its \emph{Nijenhuis torsion}, which is given by
\[
T_{\mathcal N} (X, Y) = [\mathcal NX, \mathcal NY] + \mathcal N^2[X,Y] - \mathcal N[\mathcal NX, Y] - \mathcal N [X, \mathcal NY], \quad X, Y \in \mathfrak X (M).
\]
Equivalently $T_{\mathcal N} = \frac{1}{2}[\mathcal N, \mathcal N]^{\mathrm{fn}}$, where $[-,-]^{\mathrm{fn}}$ is the Fr\"olicher-Nijenhuis bracket of vector valued forms. We say that $\mathcal N$ is \emph{integrable} when $T_{\mathcal N} = 0$ identically, in this case we call $\mathcal N$ a \emph{Nijenhuis operator}. Nijenhuis operators naturally appear in relation to other geometries: for instance in complex geometry as integrable almost complex structures, and in integrable systems as recursion operators for bi-Hamiltonian systems (see, e.g., the review \cite{KS2017} and references therein). But they also appear elsewhere (see, e.g., references in \cite{BKM2022} for a partial list). Recently, Bolsinov, Konyaev and Matveev, in a series of papers \cite{BKM2022, K2019, BKM2020, BKM2021, BKM2021b} initiated a project consisting in systematically studying Nijenhuis operators in their own. For instance, in the first paper of the series \cite{BKM2022} they discuss local normal forms, in the same spirit as Weinstein splitting theorem for Poisson structures \cite{W1983}, while in the second paper of the series \cite{K2019} Konyaev discusses the \emph{linearization problem}. 

In fact every Nijenhuis operator defines a Lie algebroid structure $(TM)_{\mathcal N}$ on the tangent bundle $TM \to M$. The anchor of $(TM)_{\mathcal N}$ is $\mathcal N$ itself, while the Lie bracket on sections is given by
\[
[X, Y]_{\mathcal N} = [\mathcal N X, Y] +[X, \mathcal N Y] - \mathcal N[X,Y], \quad X, Y \in \mathfrak X (M).
\]

Recall that a Lie algebroid is the \emph{infinitesimal counterpart} of a \emph{global object}: a Lie groupoid. When the Lie algebroid $(TM)_{\mathcal N}$ integrates to a Lie groupoid $G \rightrightarrows M$, it is natural to wonder what kind of structure on $G$ is responsible for the Nijenhuis operator defining $(TM)_{\mathcal N}$. Similar questions were already posed and answered in various analogous situations. For instance, a Poisson tensor $\pi$ on a manifold $M$ defines a Lie algebroid structure $(T^\ast M)_\pi$ on the cotangent bundle. When the Lie algebroid $(T^\ast M)_\pi$ integrates to a Lie groupoid, there is a \emph{multiplicative symplectic structure} on the source simply connected integration which is responsible for $\pi$. In this sense, a \emph{symplectic groupoid}, i.e.~a Lie groupoid equipped with a multiplicative symplectic structure, is the global counterpart of a Poisson manifold \cite{K1987,W1987}. Similarly, a holomorphic groupoid is the global counterpart of a holomorphic Lie algebroid \cite{LSX2009}, a contact groupoid is the global counterpart of a Jacobi manifold \cite{KS1993,L1993,CS2015}, etc. (see the review \cite{KS2016} for more examples).

In this paper we characterize Lie groupoids $G \rightrightarrows M$ integrating a Nijenhuis operator, i.e.~Lie groupoids whose Lie algebroid is isomorphic to $(TM)_{\mathcal N}$ for some Nijenhuis operator $\mathcal N$ on $M$. In particular we identify what precise structure on $G$ is responsible for $\mathcal N$. In order to guess the final answer we begin with a closer look at the infinitesimal picture. Recall that the datum of a Lie algebroid can be encoded in that of a differential graded (DG) manifold concentrated in degrees $0,1$. The DG manifold encoding the Lie algebroid $A \Rightarrow M$ is $(A[1], d_A)$ where $A[1]$ is obtained from $A$ by shifting by one the degree of the fiber coordinates and $d_A$ is the Lie algebroid De Rham differential. Our first result is

\begin{theorem}\label{theor:1}
The datum of a Lie algebroid of the type $(TM)_{\mathcal N}$ for some Nijenhuis operator $\mathcal N$ on $M$ is equivalent to that of a DG manifold $(A[1], d_A)$ concentrated in degrees $0,1$ equipped with an integrable almost tangent structure $V$ of internal degree $-1$ such that
$
[[d_A, V]^{\mathrm{fn}}, V]^{\mathrm{fn}} = 0.
$
\end{theorem}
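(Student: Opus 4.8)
The plan is to exhibit the claimed equivalence as a pair of mutually inverse constructions, using throughout the dictionary recalled in the introduction between Lie algebroids $A\Rightarrow M$ and DG manifolds $(A[1],d_A)$ concentrated in degrees $0,1$, together with the observation, also recorded above, that the Lie algebroid $(TM)_{\mathcal N}$ corresponds to $(T[1]M,d_{\mathcal N})$ with $d_{\mathcal N}=[\iota_{\mathcal N},d]$, so that $d_{\mathcal N}^{\,2}=\tfrac12\mathcal L_{[\mathcal N,\mathcal N]^{\mathrm{fn}}}=\mathcal L_{T_{\mathcal N}}$.

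\emph{From a Nijenhuis operator to the data.} Given a Nijenhuis operator $\mathcal N$ on $M$, take $A[1]=T[1]M$, $d_A=d_{\mathcal N}$ (a homological vector field since $d_{\mathcal N}^{\,2}=\mathcal L_{T_{\mathcal N}}=0$), and let $V$ be the canonical vertical endomorphism of $T[1]M$, i.e.\ $V=\partial_{\xi^i}\otimes dx^i$ in fibered coordinates $(x^i,\xi^i)$ with $|\xi^i|=1$. One checks directly that $V$ is a vector-valued $1$-form of internal degree $-1$ with $V^2=0$, $\ker V=\im V$ of half rank, and vanishing Nijenhuis torsion, exactly as for the classical vertical endomorphism of a tangent bundle; note that $V$ depends only on $M$, not on $\mathcal N$. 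It remains to verify the cubic identity. A short computation identifies $[d_{\mathcal N},V]^{\mathrm{fn}}$ with (a sign times) the complete (tangent) lift of $\mathcal N$ to $T[1]M$, and the complete lift of any $(1,1)$-tensor Frölicher--Nijenhuis-commutes with the vertical endomorphism -- the graded analogue of the classical compatibility -- whence $[[d_{\mathcal N},V]^{\mathrm{fn}},V]^{\mathrm{fn}}=0$.

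\emph{From the data to a Nijenhuis operator.} Conversely, start from $(A[1],d_A)$ and $V$ as in the statement. Since $A[1]$ is concentrated in degrees $0,1$, an endomorphism of $T(A[1])$ of internal degree $-1$ must kill the fibre directions $\partial_{a^\alpha}$ and send each $\partial_{x^i}$ into their span; imposing $V^2=0$ and $\ker V=\im V$ then forces $\rank A=\dim M$ and makes $V$ a fibrewise isomorphism $TM\xrightarrow{\ \sim\ }A$, i.e.\ an element of $\Gamma(\operatorname{Hom}(TM,A))$ invertible at every point. Using $V$ itself as the identification $A\cong TM$ and an adapted choice of linear fibre coordinates, we may assume $A[1]=T[1]M$ with $V=\partial_{\xi^i}\otimes dx^i$ exactly (in this degree range integrability of $V$ holds automatically, and the completeness issues of the classical normal-form theorem do not arise because $V$ is already linear along the fibres). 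Now $d_A$ is a degree-$1$ homological vector field on $T[1]M$, and its anchor is a bundle endomorphism of $TM$, i.e.\ a $(1,1)$-tensor, which we call $\mathcal N$. The key step is that the cubic identity forces $d_A=d_{\mathcal N}$: writing $W:=[d_A,V]^{\mathrm{fn}}$, a vector-valued $1$-form of internal degree $0$ and zero Euler weight, one finds $W=\mathcal N^k_i\,\partial_{x^k}\otimes dx^i+(c^m_{ij}+\partial_i\mathcal N^m_j)\,\xi^j\,\partial_{\xi^m}\otimes dx^i-\mathcal N^i_j\,\partial_{\xi^i}\otimes d\xi^j$, where $c$ denotes the structure functions of $d_A$ (and conventions are suppressed), and $[W,V]^{\mathrm{fn}}=0$ is equivalent to $c^k_{ij}=\partial_i\mathcal N^k_j-\partial_j\mathcal N^k_i$ -- precisely the statement that $W$ is the complete lift of $\mathcal N$ and $d_A=[\iota_{\mathcal N},d]$. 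Finally $d_A^{\,2}=0$, which holds because $(A[1],d_A)$ is a DG manifold, now reads $\mathcal L_{T_{\mathcal N}}=0$, i.e.\ $T_{\mathcal N}=0$; hence $\mathcal N$ is a Nijenhuis operator and $(A,d_A)=(TM)_{\mathcal N}$. Since the anchor of $(TM)_{\mathcal N}$ is $\mathcal N$ and the vertical endomorphism depends only on the base, the two constructions are mutually inverse, which is the asserted equivalence.

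\emph{Main obstacle.} The conceptual point -- that $V$ alone recovers $T[1]M$ with its vertical endomorphism -- is essentially forced by the degree and rank constraints in this low-degree range, so the real work is the Frölicher--Nijenhuis computation showing that $[[d_A,V]^{\mathrm{fn}},V]^{\mathrm{fn}}=0$ is exactly the relation $c^k_{ij}=\partial_i\mathcal N^k_j-\partial_j\mathcal N^k_i$ (equivalently, that a zero-weight vector-valued $1$-form on $T[1]M$ is a complete lift iff it Frölicher--Nijenhuis-commutes with $V$). Here the only genuine difficulty is keeping the Koszul signs of the bigraded calculus of vector-valued forms on $T[1]M$ under control.
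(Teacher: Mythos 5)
Your proposal is correct and follows essentially the same route as the paper's proof of Theorem \ref{theor:L_dV}: identify the degree $-1$ almost tangent structure with a vector bundle isomorphism $U:TM\to A$ (so that $A[1]\cong T[1]M$ with $V$ the vertical endomorphism), show that the cubic identity is equivalent to $[d_A,V]^{\mathrm{fn}}$ being the tangent lift of the anchor $\mathcal N$, and extract $T_{\mathcal N}=0$ from $d_A^2=0$. The only (cosmetic) difference is that you verify $[d_{\mathcal N},V]^{\mathrm{fn}}=\mathcal N^{\mathrm{tan}}_{[1]}$ by direct coordinate computation, whereas the paper routes it through the triple correspondence $(\mathcal D,\ell,T^M)$ and Lemma \ref{lem:delta_U}.
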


Our main result is the following global version Theorem \ref{theor:1}.

\begin{theorem}\label{theor:2}
A Lie groupoid $G \rightrightarrows M$ with source $s : G \to M$, target $t : G \to M$ and Lie algebroid $A \Rightarrow M$ integrates a Nijenhuis operator $\mathcal N$ on $M$ if and only if it is equipped with a vector bundle map $U : TM \to A$ such that 1) $\ker \overrightarrow U = \ker dt$, 2) $\operatorname{im} \overrightarrow U = \ker ds$, and 3) $[\overrightarrow U, \overrightarrow U]^{\mathrm{fn}} = 0$, i.e.~$\overrightarrow U$ is a Nijenhuis operator.
\end{theorem}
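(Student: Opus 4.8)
The plan is to realise the three conditions on $U\colon TM\to A$ as exactly the data of a Lie algebroid isomorphism $(TM)_{\mathcal N}\xrightarrow{\sim}A$ with $\mathcal N:=\rho_A\circ U$, where $\rho_A\colon A\to TM$ is the anchor of $A$ and $\overrightarrow U$ is the $(1,1)$-tensor on $G$ given at $g$ by $\overrightarrow U_g=dR_g\circ U_{t(g)}\circ dt_g\colon T_gG\to\ker ds_g\subseteq T_gG$, i.e.~right-translation into $\ker ds$ of the image of $U\circ dt$. I would first extract the pointwise content of (1) and (2). Since $t$ is a submersion, $dt_g$ is onto, so $\ker\overrightarrow U_g=(dt_g)^{-1}(\ker U_{t(g)})$ and $\operatorname{im}\overrightarrow U_g=dR_g(\operatorname{im}U_{t(g)})$, and $dR_g$ restricts to a linear isomorphism $A_{t(g)}\xrightarrow{\sim}\ker ds_g$. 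Hence (1) is equivalent to $\ker U_{t(g)}=0$ and (2) to $\operatorname{im}U_{t(g)}=A_{t(g)}$; as $t$ is surjective, (1) and (2) together amount to saying that $U$ is a vector bundle isomorphism, after which we may set $\mathcal N=\rho_A\circ U$.

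The heart of the argument is the identity
\[
\tfrac{1}{2}[\overrightarrow U,\overrightarrow U]^{\mathrm{fn}}(X,Y)=\overrightarrow{[U\bar X,U\bar Y]_A-U[\bar X,\bar Y]_{\mathcal N}},
\]
valid for all vector fields $X,Y$ on $G$ that are $t$-projectable onto $\bar X,\bar Y\in\mathfrak X(M)$, where $[\bar X,\bar Y]_{\mathcal N}=[\mathcal N\bar X,\bar Y]+[\bar X,\mathcal N\bar Y]-\mathcal N[\bar X,\bar Y]$ is the candidate bracket of $(TM)_{\mathcal N}$. To prove it I would use the standard facts that, for a $t$-projectable field $Z$ projecting to $\bar Z$, one has $\overrightarrow U Z=\overrightarrow{U\bar Z}$, which is right-invariant; that right-invariant fields $\overrightarrow\alpha$ are $t$-projectable onto $\rho_A(\alpha)$ and satisfy $[\overrightarrow\alpha,\overrightarrow\beta]=\overrightarrow{[\alpha,\beta]_A}$; and that the bracket of two $t$-projectable fields is $t$-projectable onto the bracket of the projections. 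Expanding $\tfrac{1}{2}[\overrightarrow U,\overrightarrow U]^{\mathrm{fn}}(X,Y)=[\overrightarrow U X,\overrightarrow U Y]+{\overrightarrow U}^{2}[X,Y]-\overrightarrow U[\overrightarrow U X,Y]-\overrightarrow U[X,\overrightarrow U Y]$ and substituting these facts turns every summand into a right-invariant field, and the displayed formula falls out. Since $t$-projectable fields span $T_gG$ at every $g$ and $[\overrightarrow U,\overrightarrow U]^{\mathrm{fn}}$ is a tensor, this identity computes the whole Fr\"olicher--Nijenhuis bracket; and a right-invariant field vanishes precisely when its section does. Therefore condition (3) is equivalent to $[U\bar X,U\bar Y]_A=U[\bar X,\bar Y]_{\mathcal N}$ for all $\bar X,\bar Y\in\mathfrak X(M)$, i.e.~to $U$ carrying the bracket $[\cdot,\cdot]_{\mathcal N}$ to $[\cdot,\cdot]_A$.

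It then remains to assemble the equivalence. ($\Rightarrow$) If $G$ integrates $\mathcal N$, fix a Lie algebroid isomorphism $\phi\colon(TM)_{\mathcal N}\to A$ and put $U:=\phi$: it is a vector bundle isomorphism, so (1) and (2) hold; it satisfies $\rho_A\circ U=\mathcal N$ and is bracket-preserving, so by the displayed identity $[\overrightarrow U,\overrightarrow U]^{\mathrm{fn}}=0$, which is (3). ($\Leftarrow$) Given $U$ satisfying (1)--(3): we have shown $U$ is a vector bundle isomorphism and that, with $\mathcal N=\rho_A\circ U$, it intertwines the anchored bracket $(\mathcal N,[\cdot,\cdot]_{\mathcal N})$ on $TM$ with $(\rho_A,[\cdot,\cdot]_A)$ on $A$. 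Transporting the Lie algebroid structure of $A$ through $U^{-1}$ therefore makes $(TM,\mathcal N,[\cdot,\cdot]_{\mathcal N})$ a Lie algebroid; recalling that $[\cdot,\cdot]_{\mathcal N}$ satisfies the Jacobi identity if and only if $T_{\mathcal N}=0$ (equivalently, that the associated de Rham differential squares to zero iff $T_{\mathcal N}=0$), we conclude that $\mathcal N$ is a Nijenhuis operator, that $(TM,\mathcal N,[\cdot,\cdot]_{\mathcal N})=(TM)_{\mathcal N}$, and that $U\colon(TM)_{\mathcal N}\to A$ is a Lie algebroid isomorphism; hence $G$ integrates $\mathcal N$.

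The only genuinely delicate step is the displayed Fr\"olicher--Nijenhuis computation: one must take the auxiliary extensions of tangent vectors to be $t$-projectable so that $\overrightarrow U$ returns right-invariant --- hence again $t$-projectable --- fields, and invoke tensoriality of $[\overrightarrow U,\overrightarrow U]^{\mathrm{fn}}$ to legitimise working with such extensions pointwise. Everything else reduces to bookkeeping with right-invariant vector fields, the groupoid anchor, and the definition of $[\cdot,\cdot]_{\mathcal N}$.
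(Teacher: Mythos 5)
Your proposal is correct, and its skeleton matches the paper's: conditions (1)--(2) are unpacked into ``$U$ is a vector bundle isomorphism'', condition (3) into ``$U$ intertwines $[\cdot,\cdot]_{\mathcal N}$ with $[\cdot,\cdot]_A$'' where $\mathcal N=\rho_A\circ U$, and the theorem follows by transporting the Lie algebroid structure along $U$. The difference lies in how the central torsion computation is carried out. The paper identifies $A\cong(TM)_{\mathcal N}$, takes $U=\mathbb I_M$, writes $\overrightarrow U=t^\ast(dx^i)\otimes\overrightarrow{\partial_i}$ and evaluates $[\overrightarrow U,\overrightarrow U]^{\mathrm{fn}}$ in local coordinates against the structure functions $c^k_{ij}=\partial_i\mathcal N^k_j-\partial_j\mathcal N^k_i$, then ``reverses the computation'' for the converse; the coordinate-free identity $T_{\overrightarrow U}=\overrightarrow{T^A_U}$ appears only in Remark~\ref{rem:A-torsion}, with details left to the reader. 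You prove that identity invariantly, by evaluating $T_{\overrightarrow U}$ on $t$-projectable extensions and using that $\overrightarrow U$ turns such fields into right-invariant ones; this treats both implications symmetrically and in effect supplies the proof omitted from Remark~\ref{rem:A-torsion}, at the modest cost of having to justify working with $t$-projectable extensions (legitimate, as you note, by tensoriality of $[\overrightarrow U,\overrightarrow U]^{\mathrm{fn}}$ and the fact that $t$ is a surjective submersion). The one step worth spelling out in your converse is that the Jacobi identity for $[\cdot,\cdot]_{\mathcal N}$ really does force $T_{\mathcal N}=0$: this follows because the differential associated to the anchored bracket $(\mathcal N,[\cdot,\cdot]_{\mathcal N})$ is $\mathcal L_{\mathcal N}$ and $2\mathcal L_{\mathcal N}^2=\mathcal L_{[\mathcal N,\mathcal N]^{\mathrm{fn}}}$, which vanishes if and only if $[\mathcal N,\mathcal N]^{\mathrm{fn}}=0$; the paper's proof is equally terse on this point, so this is a refinement rather than a gap.
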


In the previous statement, $\overrightarrow U$ is a $(1,1)$ tensor on $G$ defined as an appropriate \emph{right invariant lift} of $U$. There is an analogy between Theorems \ref{theor:1} and \ref{theor:2} which we now explain. First of all, an almost tangent structure of degree $-1$ on $A[1]$ as in Theorem is equivalent to a vector bundle isomorphism $U : TM \to A$. Similarly, conditions 1) and 2) in Theorem \ref{theor:2} together are equivalent to $U$ being bijective. Secondly, condition 3) in Theorem \ref{theor:2} is actually equivalent to $[\delta U, \overrightarrow U]^{\mathrm{fn}}$ where $\delta$ is a certain global analogue of the Lie derivative $[d_A, -]^{\mathrm{fn}}$ of a $(1, 1)$ tensor on $A[1]$ along the homological vector field.

We interpret the $(1,1)$ tensor $\overrightarrow U$ on $G$ of Theorem \ref{theor:2} as the structure responsible for the Nijenhuis operator defining the Lie algebroid of $G$. In this spirit, Theorem \ref{theor:2} is an \emph{integration result} for Nijenhuis operators. However, we stress that such result is slightly different in nature from the integration theorem of, e.g., a Poisson structure $\pi$. While, in general, we can guarantee the existence of a multiplicative symplectic form only on a source-simply connected integration of the Lie algebroid $(T^\ast M)_\pi$, a $(1,1)$ tensor $\overrightarrow U$ as in Theorem \ref{theor:2} exists on \emph{every} Lie groupoid integrating $(TM)_{\mathcal N}$.

The paper is organized as follows. In Section \ref{Sec:1} we recollect the necessary material about $(1,1)$ tensors on Lie groupoids, Lie algebroids and graded manifolds. This material is not novel except for Lemma \ref{lem:delta_U} that will play an important role in the rest of the paper. A version of the cochain complex $(C^\bullet_{\mathrm{def}}(G, T^{1,0}), \delta)$ of Section \ref{Sec:1} appeared already in \cite{PSV2021}. In Section \ref{Sec:2} we state and prove Theorem \ref{theor:1} (see Theorem \ref{theor:L_dV}). In Section \ref{Sec:3} we prove our main result Theorem \ref{theor:2} (see Theorem \ref{theor:delta_U}) and illustrate it with a few trivial examples. In Section \ref{Sec:4} we provide more illustrative examples, including a curious Lie groupoid structure on the double tangent bundle $TTB$ of a manifold $B$ which, to the best of our knowledge, is new (see Subsection \ref{Subsec:5}). We also discuss linear Nijenhuis operators on a vector space.

Finally, we want to mention the recent work of Bursztyn, Drummond, Netto \cite{D2020, BDN2021} on Nijenhuis operators in connections to Lie groupoids, Lie algebroids and related structure (see also \cite{D2019,DCG}). The present paper goes in a complementary direction. While those authors consider Nijenhuis operators on Lie groupoids (resp.~Lie algebroids) integrating (resp.~encoding) other geometric structures, we consider Lie groupoids integrating Nijenhuis operators themselves. 

\bigskip

\textbf{Notation.} We denote by $G \rightrightarrows M$ a Lie groupoid with $G$ its space of arrows, and $M$ its space of objects. We denote by $A \Rightarrow M$ a Lie algebroid with $A \to M$ its underlying vector bundle. We denote by $[-,-]_A$ the Lie bracket on sections of $A$ and by $\rho_A : A \to TM$ the anchor. Given a surjective submersion $\pi : M \to B$ we denote by $T^\pi M := \ker d\pi$ the vertical tangent bundle with respect to $\pi$.

We assume the reader is familiar with Lie groupoids and Lie algebroids. Our main reference for this material is the lecture notes of Crainic and Fernandes \cite{CF2011} (see also \cite{mackenzie}). Additionally, we assume some familiarity with graded geometry, including $DG$ manifolds, for which the reader might consult \cite{M2006}.

\section{A Review of Multiplicative and IM $(1,1)$ Tensors}\label{Sec:1}

In the rest of the paper we will extensively work with $(1,1)$ tensors on Lie groupoids, Lie algebroids and graded manifolds (of a certain type). In this section we summarize the necessary material. Our main references here are \cite{LSX2009, BD2013, BD2019}. We will often interpret $(1,1)$ tensors as vector valued $1$-forms. 

Let $G \rightrightarrows M$ be a Lie groupoid with Lie algebroid $A \Rightarrow M$. The structure maps of $G$ will be denoted $s, t : G \to M$ (source and target), $m : G^{(2)} \to M$ (multiplication), $u : M \to G$ (unit), and $i : G \to G$ (inversion). We denote by $G^{(k)}$ the manifold of $k$ composable arrows in $G$. Let $T \in \Omega^1 (G, TG)$. There are several equivalent ways of stating the compatibility of $T$ and the groupoid structure. The easiest one is the following: we say that $T$ is a \emph{multiplicative $(1,1)$ tensor} if $T : TG \to TG$ is a groupoid map with respect to the tangent groupoid structure $TG \rightrightarrows TM$, in particular, there exists a $(1,1)$ tensor $T^M$ on $M$ such that $T$ and $T^M$ are both $s$-related and $t$-related.

Multiplicative $(1,1)$ tensors on $G$ can also be seen as $0$-cocycles in an appropriate cochain complex $C{}^\bullet_{\mathrm{def}} (G, T^{1,0})$ that we now describe:

First of all, $C{}^\bullet_{\mathrm{def}} (G, T^{1,0})$ will be concentrated in degrees $k \geq -1$. Degree $-1$ cochains are vector bundle maps
\[
U : TM \to A.
\]
For $k \geq 0$, degree $k$ cochains are vector bundle maps
\[
U : T G^{(k + 1)} \to  TG, 
\]
covering the projection $\mathrm{pr}_1 : G^{(k + 1)} \to G$, $(g_1, \ldots, g_{k+1}) \mapsto g_1$ onto the first factor, for which there exists another vector bundle map
\[
U^M : T G^{(k)} \to TM,
\]
covering the projection $t \circ \mathrm{pr}_1 : G^{(k)} \to M$, such that the following diagram commutes: 
\[
\begin{array}{c}
\xymatrix{ T G^{(k+1)} \ar[r]^-{U} \ar[d]_-{d(\mathrm{pr}_2 \times \cdots \times \mathrm{pr}_{k+1})} &  TG  \ar[d]^-{ds} \\
T G^{(k)} \ar[r]^-{U^M} & TM}
\end{array},
\]
where $\mathrm{pr}_i : G^{(k+1)} \to G$ is the projection onto the $i$-th factor.
For instance, a $(1,1)$ tensor $T$ on $G$ belongs to $C^0_{\mathrm{def}}(G, T^{1,0})$ if and only if it is $s$-projectable, i.e.~it is $s$-related to some $(1,1)$ tensor on $M$.

Next we describe the differential $\delta : C{}^\bullet_{\mathrm{def}} (G, T^{1,0}) \to C{}^{\bullet + 1}_{\mathrm{def}} (G, T^{1,0})$.  We begin with its action on a degree $-1$ cochain  $U : T M \to A$. First define two $(1,1)$ tensors $\overrightarrow U$ and $\overleftarrow U$ on $G$, by putting
\begin{equation}\label{eq:arrow_U}
\overrightarrow U_g = dR_g \circ U_{t(g)} \circ dt  \quad \text{and} \quad \overleftarrow U_g = dL_g \circ di \circ U_{s(g)} \circ ds,
\end{equation}
for every $g \in G$, where $R_g$ (resp. $L_g$) denotes right (resp. left) translation by $g$. Now 
\[
\delta U := \overrightarrow U + \overleftarrow U
\]
which is a well defined $0$-cochain whose $M$-projection $(\delta U)^M$ is given by
\[
(\delta U)^M = \rho_A \circ U : TM \to TM,
\]
where $\rho_A : A \to TM$ is the anchor map.

Finally, let $U \in  C{}^{k}_{\mathrm{def}} (G, T^{1,0})$, with $k \geq 0$. For all $(v_1, \ldots, v_{k+2}) \in TG^{(k+2)}$ we put
\begin{equation}\label{eq:delta_bar}
\begin{aligned}
 \delta U (v_1, \ldots, v_{k+2}) 
& = - U \left( v_1 v_2, v_3, \ldots, v_{k+2}\right)  U \left( v_2, \ldots, v_{k+2}\right)^{-1} \\
& \quad + \sum_{i = 2}^{k+1} (-)^{i} U\left(v_1, \ldots, v_i  v_{i+1}, \ldots, v_{k+2} \right)  + (-)^{k} U (v_1, \ldots, v_{k+1} ),
\end{aligned}
\end{equation}
where we used the multiplication and inversion in the tangent Lie groupoid $TG \rightrightarrows TM$. Then $\delta U$ is a well-defined $(k+1)$-cochain whose $M$-projection $(\delta U)^M$ is given by
\[
\begin{aligned}
 (\delta U)^M (v_2, \ldots, v_{k+2})
& = -dt \big(U (v_2, \ldots, v_{k+2}) \big) \\
& \quad + \sum_{i = 2}^{k+1} (-)^{i+1} U^M (v_2, \ldots, v_iv_{i+1}, \ldots, v_{k+2})  + (-)^{k} U^M (v_2, \ldots, v_{k+1}).
\end{aligned}
\]
The operator $\delta$ is indeed a differential. The definition of $C{}^{\bullet}_{\mathrm{def}} (G, T^{1,0})$ is very similar to that of Crainic-Mestre-Struchiner deformation complex of a Lie groupoid $G \rightrightarrows M$ \cite{CMS2018}, and the definition of the differential $\delta$ is formally identical up to replacing points in $G^{(k+2)}$ with points in $TG^{(k+2)}$. This is the main reason why we adopt a similar notation $C{}^{\bullet}_{\mathrm{def}} (G, T^{1,0})$ for our complex. Notice that there are versions of this complex for higher order tensors (see, e.g.~\cite{PSV2021}). We speculate that cohomology classes of $C{}^{\bullet}_{\mathrm{def}} (G, T^{1,0})$ should be seen as shifted $(1,1)$ tensors on the differentiable stack $[G / M]$ represented by $G$, but we will not explore this point of view here.

Next we discuss infinitesimal multiplicative (IM) $(1,1)$ tensors on Lie algebroids. We begin with linear $(1,1)$ tensors on vector bundles. Let $A \to M$ be a vector bundle 
, and let $T : TA \to TA$ be a $(1,1)$ tensor on the total space $A$. We say that $T$ is \emph{linear} if it is multiplicative with respect to the Lie groupoid structure on $A$ given by fiber-wise addition, equivalently $T$ is a vector bundle map with respect to the vector bundle structure $TA \to TM$. This definition emphasizes the fiber-wise addition in $A$. There is an equivalent definition emphasizing the fiber-wise scalar multiplication which is often useful in practice. Namely, a $(1,1)$ tensor on $A$ is linear if and only if it is of degree $0$ with respect to the action $h : \mathbb R \times A \to A$ of the multiplicative monoid $\mathbb R$ on $A$ given by fiber-wise scalar multiplication, i.e.~$h_r^\ast T = T$ for all $r \in \mathbb R \smallsetminus 0$. In the following, we will also need $(1, 1)$-tensors $S \in \Omega^1 (A, TA)$ of degree $-1$ with respect to $h$, i.e.~$h_r^\ast S = r^{-1} S$ for all $r \in \mathbb R \smallsetminus 0$. We call such tensors \emph{core} $(1,1)$ tensors adopting a terminology that we already used in \cite{PSV2021}. Both core and linear $(1,1)$ tensors can be encoded into certain sections of appropriate vector bundles over $M$. To see this, first recall that degree $-1$ vector fields (also called core vector fields in \cite{PSV2021, PSV2021b}) on $A$ identify with sections $a$ of $A$ via the vertical lift $a \mapsto a^\uparrow$, and degree $0$ vector fields (also called linear vector fields) on $A$ identify with sections $D$ of the gauge algebroid $DA \Rightarrow M$ via the map $D \mapsto X_D$, where $X_D \in \mathfrak X (A)$ is the linear vector field uniquely determined by $[X_D, a^\uparrow] = (Da)^\uparrow$ for all $a \in \Gamma (A)$. Similarly, core $(1,1)$ tensors on $A$ identify with vector bundle maps $U : TM \to A$ via the map $U \mapsto U^\uparrow$, where $U^\uparrow \in \Omega^1 (A, TA)$ is the core $(1,1)$ tensor uniquely determined by $U^\uparrow (X_D) = U(\sigma_D)^\uparrow$, and $\sigma_D \in \mathfrak X (M)$ is the symbol of $D$. Finally, linear $(1,1)$ tensors on $A$ identify with triples $(\mathcal D, \ell, T^M)$ where $\ell : A \to A$ is a vector bundle map, $T^M \in \Omega^1 (M, TM)$ is a $(1,1)$-tensor on $M$, and $\mathcal D : \Gamma (A) \to \Omega^1 (M, A)$ is a differential operator such that
\[
\mathcal D (fa) = f \mathcal D(a) + df \otimes \ell (a) - \langle df, T^M \rangle \otimes a, \quad a \in \Gamma (A), \quad f \in C^\infty (M),
\]
via the map $(\mathcal D, \ell, T^M) \mapsto T$ where $T \in \Omega^1 (A, TA)$ is the linear $(1,1)$ tensor uniquely determined by its $M$-projection being $T^M$ and, additionally,
\[
\mathcal L_{a^\uparrow} T = \mathcal D(a)^\uparrow, \quad  T a^\uparrow= \ell (a)^\uparrow.
\]

When $A \Rightarrow M$ is a Lie algebroid with Lie bracket $[-,-]_A$ and anchor map $\rho_A$, then a $(1,1)$ tensor $T \in \Omega^1 (A, TA)$ is IM if $T : TA \to TA$ is a Lie algebroid map with respect to the tangent algebroid structure $TA \Rightarrow TM$. In particular $T$ is a linear $(1,1)$ tensor and $T$ being IM is equivalent to the associated triple $(\mathcal D, \ell, T^M)$ satisfying the following identities \cite{DE2019, BD2019}:
\[
\begin{aligned}
\mathcal D \big([a, b]_A\big) & = \mathcal L_a \mathcal D (b) - \mathcal L_b \mathcal D (a) \\
\ell \big( [a, b]_A\big) & = [a, \ell (b)] - \iota_{\rho_A (b)}\mathcal D (a), \\
\mathcal L_{\rho_A (a)} T^M & = \rho_A \circ \mathcal D (a), \\
T^M \circ \rho_A & = \rho_A \circ \ell,
\end{aligned}
\]
for all $a, b \in \Gamma (A)$.

If $T : TG \to TG$ is a multiplicative $(1,1)$ tensors on the Lie groupoid $G \rightrightarrows M$, then, by differentiation, we get an IM $(1,1)$ tensor $\dot T : TA \to TA$ on the Lie algebroid $A \Rightarrow M$ of $G$. The triple $(\mathcal D, \ell, T^M)$ corresponding to $\dot T$ is given by
\[
\mathcal D(a) = \mathcal L_{\overrightarrow a} T |_{TM}, \quad \ell (a) = T \overrightarrow a |_M,
\]
for all $a \in \Gamma (A)$, while $T^M$ is the $M$-projection of $T$ (here $\overrightarrow a \in \mathfrak X (G)$ is the right invaraint vector field corresponding to $a$).

IM $(1,1)$ tensors can also be seen as $0$-cocycles in a cochain complex. The easiest way to see this is via graded geometry. We assume the reader is familiar with graded manifolds and homological vector fields. We only recall that an $\mathbb N$-manifold of degree $k$ is a graded manifold whose function algebra is generated in degree $0, 1, \ldots, k$. An $\mathbb NQ$-manifold is an $\mathbb N$-manifold equipped with a $Q$-structure, i.e.~a (degree 1) homological vector field $Q$. We will also need to consider vector valued forms on $\mathbb N$-manifolds. Our conventions on differential forms on an $\mathbb N$-manifold are as follows. Given an $\mathbb N$-manifold $\mathcal M$, differential forms on $\mathcal M$ are fiber-wise polynomial functions on the shifted tangent bundle $T[1] \mathcal M$ and are denoted $\Omega^\bullet (\mathcal M)$. Similarly, vector valued forms on $\mathcal M$ are fiber-wise polynomial sections of the pull-back bundle $T[1] \mathcal M \times_{\mathcal M} T \mathcal M \to T[1] \mathcal M$ and are denoted $\Omega^\bullet (\mathcal M, T\mathcal M)$. Both $\Omega^\bullet (\mathcal M)$ and $\Omega^\bullet (\mathcal M, T\mathcal M)$ are bi-graded vector spaces, the two gradings being the form degree, and the internal, i.e.~coordinate, degree. The total degree of a form is then the sum of the internal and the form degrees, and the usual Koszul sign rule holds with respect to the total degree. For instance, the Fr\"olicher-Nijenhuis bracket on $\Omega^\bullet (\mathcal M, T\mathcal M)$ is a graded Lie bracket of total degree $0$. 

Remember that the datum of a vector bundle $A \to M$ is equivalent to the datum of an $\mathbb N$-manifold of degree $1$ via $A \leadsto A[1]$ (where $A[1]$ is the $\mathbb N$-manifold obtained by shifting by $1$ the fiber degree in $A$), and the datum of a Lie algebroid $A \Rightarrow M$ is equivalent to the datum of an $\mathbb N Q$-manifold of degree $1$ via $A \leadsto (A[1], d_A)$ where $d_A$ is the Lie algebroid De Rham differential. Exactly as for usual vector bundles, degree $-1$ vector fields on $A[1]$ identify with sections of $A$ and degree $0$ vector fields identify with sections of $DA$. Similarly, $(1,1)$ tensors on $A[1]$ of internal degree $-1$ identify with vector bundle maps $U : TM \to A$, hence with core $(1,1)$ tensors on $A$, and $(1,1)$ tensors on $A[1]$ of internal degree $0$ identify with triples $(\mathcal D, \ell, T^M)$ as above, hence with linear $(1,1)$ tensors on $A$. If $T \in \Omega^1 (A, TA)$ is a linear $(1,1)$ tensor, we denote by $T^{[1]} \in \Omega^1 (A[1], TA[1])$ the corresponding $(1,1)$ tensor (of internal degree $0$) on $A[1]$. If $A \Rightarrow M$ is a Lie algebroid, then $T$ is IM if and only if $\mathcal L_{d_A} T^{[1]} = 0$ \cite{PSV2021}, hence the assignment $T \mapsto T^{[1]}$ identifies IM $(1,1)$ tensors on $A$ with $0$-cocycles in the cochain complex $\big(\Omega^1 (A[1], TA[1]), \mathcal L_{d_A} \big)$ (whose grading is given by the internal degree). 

\begin{example}\label{ex:N^tan}
Let $T \in \Omega^1 (M, TM)$ be a $(1,1)$ tensor on $M$. Its \emph{tangent lift} is the linear $(1,1)$ tensor $T^{\mathrm{tan}} \in \Omega^1 (TM, TTM)$ on $TM$ corresponding to the triple
\[
\big([-, T]^{\mathrm{fn}}, T, T\big)
\]
(see, e.g., \cite{BDN2021}) where $[-, -]^{\mathrm{fn}}$ is the Fr\"olicher-Nijenhuis bracket of vector valued forms.
\end{example}

Finally, let $A \Rightarrow M$ be the Lie algebroid of a Lie groupoid $G \rightrightarrows M$, and let $U : TM \to A$ be a vector bundle map. Then $U$ can be seen both as a degree $-1$ conchain in the cochain complex $\big(C_{\mathrm{def}}^\bullet (G, T^{1,0}), \delta \big)$ and in the cochain complex $\big(\Omega^1 (A[1], TA[1]), \mathcal L_{d_A} \big)$ (via $U \mapsto U^\uparrow$). To the best of our knowledge the following lemma is new.

\begin{lemma}\label{lem:delta_U}
The triples $(\mathcal D, \ell, T^M)$ corresponding to $\dot{\delta U}$ and $\mathcal L_{d_A} U^\uparrow$ are both given by
\begin{equation}\label{eq:D,l,T,deltaU}
\mathcal D (a) = \mathcal L^A_a U, \quad 
\ell =  U \circ \rho_A, \quad 
T^M = \rho_A \circ U,
\end{equation}
for all $a \in \Gamma (A)$ (where $\mathcal L^A$ is the Lie algebroid Lie derivative, see below), hence
\[
 \dot{(\delta U)}{}^{[1]} = \mathcal L_{d_A} U^\uparrow.
\]
\end{lemma}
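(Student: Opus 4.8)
The plan is to compute both triples directly from the definitions recalled in the excerpt and observe they coincide, thereby also establishing the final identity $\dot{(\delta U)}{}^{[1]} = \mathcal{L}_{d_A} U^\uparrow$ (since a linear $(1,1)$ tensor on $A$ is determined by its associated triple, and $T^{[1]}$ is the graded avatar of a linear tensor $T$). There are thus two computations to carry out.

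\textbf{The groupoid side.} First I would compute the triple corresponding to $\dot{\delta U}$. By the formulas recalled just before Lemma \ref{lem:delta_U}, if $T$ is a multiplicative $(1,1)$ tensor on $G$ with differential $\dot T$, then $\mathcal{D}(a) = \mathcal{L}_{\overrightarrow a} T|_{TM}$, $\ell(a) = T\overrightarrow a|_M$, and $T^M$ is the $M$-projection of $T$. Here $T = \delta U = \overrightarrow U + \overleftarrow U$. The key geometric inputs are: (i) along $M$ (i.e.\ at units), $\overleftarrow U$ vanishes on the appropriate locus or contributes only through its interplay with right-invariant fields — more precisely one uses that $\overrightarrow U_g = dR_g \circ U_{t(g)} \circ dt$ and $\overleftarrow U_g = dL_g \circ di \circ U_{s(g)} \circ ds$, so at a unit $x$, $\overrightarrow U_x = U_x \circ dt|_x$ while $\overleftarrow U_x = di \circ U_x \circ ds|_x$; (ii) the flow of $\overrightarrow a$ is by left translations, so it commutes with right translations, which makes $\mathcal{L}_{\overrightarrow a}\overrightarrow U$ tractable; (iii) the Lie derivative of a right-invariant field along another is the right-invariant field of the bracket. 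Concretely, $\ell(a) = (\overrightarrow U + \overleftarrow U)\overrightarrow a|_M$: the term $\overrightarrow U \overrightarrow a|_x = dR_x(U_x(dt(\overrightarrow a_x))) = U_x(\rho_A(a)_x)$ since $dt(\overrightarrow a) = \rho_A(a)$ at units, giving $\ell = U \circ \rho_A$; the $\overleftarrow U$ term should be killed because $ds(\overrightarrow a) = 0$. Similarly $T^M = (\delta U)^M = \rho_A \circ U$ as already recorded in the excerpt. For $\mathcal{D}(a) = \mathcal{L}_{\overrightarrow a}(\overrightarrow U + \overleftarrow U)|_{TM}$, the left-invariant part $\overleftarrow U$ has flow-invariant description making its Lie derivative along $\overrightarrow a$ restrict to zero on $TM$, while $\mathcal{L}_{\overrightarrow a}\overrightarrow U|_{TM}$ should unwind to $\mathcal{L}^A_a U$ by a right-invariance argument; I expect this to match the standard formula for the IM tensor associated to a multiplicative one.

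\textbf{The algebroid side.} Second I would compute the triple corresponding to $\mathcal{L}_{d_A} U^\uparrow$, working on $A[1]$ (or equivalently with the core $(1,1)$ tensor $U^\uparrow$ on $A$). Using the characterizations recalled in the excerpt — a linear $(1,1)$ tensor $T$ on $A$ corresponds to $(\mathcal{D}, \ell, T^M)$ via $\mathcal{L}_{a^\uparrow} T = \mathcal{D}(a)^\uparrow$, $Ta^\uparrow = \ell(a)^\uparrow$, and $M$-projection $T^M$ — I would read off the triple of $\mathcal{L}_{d_A} U^\uparrow$ by bracketing with $a^\uparrow$ and with the homological vector field. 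The graded Jacobi identity for the Frölicher–Nijenhuis bracket gives $\mathcal{L}_{a^\uparrow}\mathcal{L}_{d_A} U^\uparrow = \mathcal{L}_{[a^\uparrow, d_A]} U^\uparrow + \mathcal{L}_{d_A}\mathcal{L}_{a^\uparrow} U^\uparrow$; since $U^\uparrow$ is core (internal degree $-1$), $\mathcal{L}_{a^\uparrow} U^\uparrow$ has internal degree $-2$ hence vanishes on $A[1]$ (degree reasons: there are no such tensors), and $[a^\uparrow, d_A]$ is (up to sign) the linear vector field $X_{D}$ associated to the algebroid derivation $\mathcal{L}^A_a$ — i.e.\ $[d_A, a^\uparrow] = (\mathcal{L}^A_a)^{[1]}$-type identity. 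This pins down $\mathcal{D}(a) = \mathcal{L}^A_a U$. For $\ell$, evaluate $(\mathcal{L}_{d_A} U^\uparrow)(a^\uparrow) = \mathcal{L}_{d_A}(U^\uparrow a^\uparrow) - U^\uparrow(\mathcal{L}_{d_A} a^\uparrow)$; the first term vanishes since $U^\uparrow$ kills vertical lifts ($U^\uparrow$ is determined by its value on linear fields $X_D$), and $\mathcal{L}_{d_A} a^\uparrow = [d_A, a^\uparrow]$ is the linear field for $\mathcal{L}^A_a$ whose symbol is $\rho_A(a)$, so $U^\uparrow$ of it is $U(\rho_A(a))^\uparrow$, giving $\ell = U \circ \rho_A$. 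Finally the $M$-projection of $\mathcal{L}_{d_A} U^\uparrow$ is $\rho_A \circ U$ by a direct symbol computation (or by compatibility with $\dot\delta U$).

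\textbf{Main obstacle.} I expect the delicate point to be the bookkeeping of signs and the precise identification $[d_A, a^\uparrow] \leftrightarrow \mathcal{L}^A_a$ in the graded setting (including getting the internal degrees and Koszul signs right so that the Frölicher–Nijenhuis Jacobi identity is applied correctly), together with carefully justifying that the unwanted terms — $\mathcal{L}_{a^\uparrow} U^\uparrow$ on the algebroid side and the left-invariant contributions $\overleftarrow U$ on the groupoid side — genuinely vanish rather than merely being "small". The conceptual content is light; the risk is purely in the sign/degree accounting, so I would set up clean conventions for $\mathcal{L}^A_a U$ (the Lie algebroid Lie derivative of a vector-bundle map $TM \to A$, which the lemma flags as "see below") at the outset and then let the two computations run in parallel, matching term by term. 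Once both triples are shown equal to \eqref{eq:D,l,T,deltaU}, the displayed equality $\dot{(\delta U)}{}^{[1]} = \mathcal{L}_{d_A} U^\uparrow$ follows immediately from the fact that the triple determines the linear $(1,1)$ tensor and hence its $[1]$-avatar.
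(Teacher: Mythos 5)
Your proposal follows essentially the same route as the paper: on the groupoid side it reads off $\ell$ and $T^M$ from the $s$/$t$-projectability of $\overrightarrow U$, $\overleftarrow U$ and right-invariance (the paper does the $\mathcal D$-computation in a local frame, which is the same argument made concrete), and on the algebroid side it uses the graded identity $\mathcal L_{a^\uparrow}\mathcal L_{d_A}U^\uparrow=\mathcal L_{[a^\uparrow,d_A]}U^\uparrow$ together with the vanishing of $\mathcal L_{a^\uparrow}U^\uparrow$ for internal-degree reasons and the identification $[d_A,a^\uparrow]=X_{[a,-]_A}$, exactly as in the paper. The concluding step (the triple determines the linear tensor, hence $\dot{(\delta U)}{}^{[1]}=\mathcal L_{d_A}U^\uparrow$) is also the paper's; only minor caveat: derive the $M$-projection of $\mathcal L_{d_A}U^\uparrow$ by the direct symbol computation you mention, not "by compatibility with $\dot{\delta U}$", which would be circular.
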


\begin{proof}
Begin with $\dot{\delta U}$. Let $a \in \Gamma (A)$ and notice that the vector field $\overrightarrow a$ (resp.~$\overleftarrow a$) is both $s$-projectable and $t$-projectable. It $s$-projects onto the trivial vector field (resp.~onto $\rho_A (a)$) and $t$-projects onto $\rho_A (a)$ (resp.~onto the trivial vector field). Similarly, a direct check shows that $\overrightarrow U$ (resp.~$\overleftarrow U$) $s$-projects on the trivial $(1,1)$ tensor (resp.~onto $\rho_A \circ U$) while it $t$-projects onto $\rho_A \circ U$ (resp.~onto the trivial $(1,1)$ tensor). The third one of Formulas (\ref{eq:D,l,T,deltaU}) for $\dot{\delta U}$ now follows. The second one can be proved with a direct computation. For the first one there is an easy local proof. Namely, choose a local frame $(u_\alpha)$ of $\Gamma (A)$. Then locally
\[
U = U^\alpha \otimes u_\alpha
\]
for some local $1$-forms $U^\alpha$ on $M$. Hence
\begin{equation}\label{right_U_loc}
\overrightarrow U =  t^\ast (U^\alpha) \otimes \overrightarrow u_\alpha, \quad \text{and} \quad \overleftarrow U =  s^\ast (U^\alpha) \otimes \overleftarrow u_\alpha.
\end{equation}
A direct computation exploiting these formulas (and the obvious properties of right/left invariant vector fields) shows that
\[
\mathcal L_{\overrightarrow a} \delta U =  \overrightarrow{\mathcal L^A_{a} U}
\]
and the third one of (\ref{eq:D,l,T,deltaU}) follows (here $\mathcal L^A$ is the \emph{Lie algebroid Lie derivative}: $\mathcal L^A_a U (X) = [a, UX]_A - U[\rho_A(a), X]$, for all $a \in \Gamma (A)$ and $X \in \mathfrak X (M)$).

As for $\mathcal L_{d_A} U^\uparrow$, a straightforward computation shows that the triple $(\mathcal D, \ell, T^M)$ corresponding to it is given by the same formulas. We present (part of) it for completeness. Let $a \in \Gamma (A)$ and $D \in \Gamma (DA)$, then
\[
\begin{aligned}
\mathcal D (a) (\sigma_D)^\uparrow & = \mathcal D (a)^\uparrow (X_D) = (\mathcal L_{a^\uparrow} \mathcal L_{d_A} U^\uparrow) X_D  = (\mathcal L_{[a^\uparrow, d_A]} U^\uparrow )X_D = (\mathcal L_{X_{[a, -]_A}} U^\uparrow) X_D\\
& = \big[ X_{[a, -]}, U^\uparrow X_D \big] - U^\uparrow \left[X_{[a, -]_A}, X_D \right] =\big[ X_{[a, -]}, U (\sigma_D)^\uparrow \big] - U^\uparrow \left[X_{[[a, -]_A, D]]} \right]  \\
& = [a, U(\sigma_D)]_A^\uparrow - U(\sigma_{[[a, -], D]]})^\uparrow = \big([a, U(\sigma_D)]_A - U[\rho_A (a), \sigma_D] \big)^\uparrow \\
& = (\mathcal L_a U)(\sigma_D)^\uparrow
\end{aligned}
\]
where we used that there are no nontrivial $(1,1)$ tensors on $A[1]$ of internal degree $-2$. The rest is straightforward. 
\end{proof}

\begin{remark}
As an application of Lemma \ref{lem:delta_U} we discuss a \emph{canonical cohomology class} in the cochain complex $C^\bullet_{\mathrm{def}}(G, T^{1,0})$. Namely, let $G \rightrightarrows M$ be a Lie groupoid with Lie algebroid $A$. The identity endomorphism $\mathbb I_G : TG \to TG$ is clearly a multiplicative $(1,1)$ tensor, hence a distinguished $0$-cocycle in $C^\bullet_{\mathrm{def}}(G, T^{1,0})$. Its cohomology class $[\mathbb I_G]$ is therefore a canonical cohomology class attached to the Lie groupoid $G$. It is easy to see that $[\mathbb I_G] = 0$ if and only if $G$ integrates the tangent bundle $TM \Rightarrow M$. Indeed, let $\mathbb I = \delta U$ for some vector bundle map $U : TM \to A$. The triple $(\mathcal D, \ell, T^M)$ corresponding to $\dot{\mathbb I}$ is $(0, \mathbb I_A, \mathbb I_M)$, where $\mathbb I_A : A \to A$ and $\mathbb I_M : TM \to TM$ are the identity endomorphisms. It immediately follows from Lemma (\ref{lem:delta_U}) that $U$ is an isomorphism and $\rho_A$ is its inverse. Hence $\rho_A : A \to TM$ is a Lie algebroid isomorphism. Conversely, let $A = TM$ (with the commutator as Lie bracket, and the identity as anchor). If $G = M \times M$ is the pair groupoid, a direct computation shows that $\mathbb I_G = \delta \mathbb I_M$. In all other cases the anchor map $(s, t) : G \to M \times M$ is a locally invertible groupoid map (covering the identity map). In particular, the pull-back along $(s,t)$ is a cochain map $(s, t)^\ast : C^\bullet_{\mathrm{def}}(M \times M, T^{1,0}) \to C^\bullet_{\mathrm{def}}(G, T^{1,0})$. We conclude that
\[
\mathbb I_G = (s,t)^\ast \mathbb I_{M \times M} = (s,t)^\ast \delta \mathbb I_M = \delta \big((s,t)^\ast \mathbb I_M \big)= \delta \mathbb I_M.
\]
\end{remark}

\section{Nijenhuis Structures, Lie Algebroids, and Graded Manifolds}\label{Sec:2}

In this section we describe Nijenhuis operators in terms of DG manifolds.  It is often the case that an additional structure on a Lie algebroid $A \Rightarrow M$ is encoded by an appropriate structure on the associated $\mathbb N Q$-manifold $(A[1], d_A)$. For instance, when $A = (T^\ast M)_\pi$ is the cotangent algebroid of a Poisson manifold $(M, \pi)$ then $(A[1], d_A)$ is additionally equipped with a symplectic structure $\omega$ of internal degree $1$ such that $\mathcal L_{d_A} \omega = 0$ and every degree $1$ symplectic $\mathbb N Q$-manifold arises in this way up to isomorphisms, see \cite{R2002} (similar results hold for the Lie algebroid of a Jacobi structure \cite{G2013,M2013} and, more generally, for Lie algebroids equipped with IM vector valued forms \cite{V2016,PSV2021}). We want to prove an analogous result for Lie algebroids defined by Nijenhuis operators. We begin discussing vector bundles $A \to M$ equipped with a vector bundle isomorphism $A \cong TM$. 

Recall that an \emph{almost tangent structure} on a manifold $P$ is a $(1,1)$-tensor $V \in \Omega^1 (P, TP)$ such that $\ker P = \operatorname{im} P$ (in particular, $\dim P = \mathrm{even}$). An almost tangent structure $V$ is \emph{integrable} if $[V, V]^{\mathrm{fn}} = 0$. In other words an almost tangent structure is integrable if it is additionally a Nijenhuis operator. Let $M$ be a manifold. The vertical endomorphism $V :TTM \to TTM$ on the tangent bundle is an integrable almost tangent structure, and every integrable almost tangent structure is locally of this form. We will show in a moment that degree $-1$ almost tangent structures on $\mathbb N$-manifolds of degree $1$ are all \emph{globally} of this form in an appropriate sense. To see this, first notice that the vertical endomorphism on $TM$ is a core $(1,1)$ tensor (corresponding to the identity map $\mathbb I_M : TM \to TM$), hence it also corresponds to a $(1,1)$ tensor of degree $-1$ on $T[1]M$. Denote the latter by $V^{[1]}$, and call it the \emph{vertical endomorphism again}. If $x^i$ are local coordinates on $M$ and $\dot x{}^i$ are the associated degree $1$ fiber coordinates on $T[1] M$, then locally
\[
V = dx^i \otimes \frac{\partial}{\partial \dot x{}^i},
\]
In particular, $V$ is an integrable almost tangent structure. In is easy to see that this almost tangent structure enjoys the following ``universal property''. A vector bundle map $U : TM \to A$ induces a smooth map $T[1]M \to A[1]$ of graded manifolds, also denoted $U$. The vertical endomorphism $V$ on $A[1]$ and the $(1,1)$ tensor $U^\uparrow$ of degree $-1$ corresponding to $U$ are then automatically $U$-related.

\begin{lemma}
Let $A \to M$ be a vector bundle. The assignment $U \mapsto U^\uparrow$ establishes a bijection between vector bundle isomorphisms $U : TM \to A$ and integrable almost tangent structures of internal degree $-1$ on $A[1]$.
\end{lemma}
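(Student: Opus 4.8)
The plan is to unwind both sides of the claimed bijection and check the correspondence fibrewise-algebraically, using the "universal property" of the vertical endomorphism $V$ on $T[1]M$ that was just recorded. First I would recall that, as already stated in the excerpt, $(1,1)$ tensors of internal degree $-1$ on $A[1]$ are in canonical bijection with vector bundle maps $U : TM \to A$ via $U \mapsto U^\uparrow$, where $U^\uparrow(X_D) = U(\sigma_D)^\uparrow$ for all $D \in \Gamma(DA)$. So the content of the lemma is precisely that, under this already-established bijection, $U$ is a vector bundle \emph{isomorphism} if and only if $U^\uparrow$ is an \emph{integrable almost tangent structure}, i.e.\ $\ker U^\uparrow = \operatorname{im} U^\uparrow$ together with $[U^\uparrow, U^\uparrow]^{\mathrm{fn}} = 0$.

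The key computational step is to express $U^\uparrow$ in local coordinates. Pick local coordinates $x^i$ on $M$, let $\dot x{}^i$ be the induced degree-$1$ fibre coordinates on $A[1]$ relative to a local frame $(u_\alpha)$ of $A$ — more precisely, if $\dot x{}^\alpha$ are the degree-$1$ fibre coordinates dual to $u_\alpha$, then writing $U = U^\alpha_i\, dx^i \otimes u_\alpha$ locally one gets
\[
U^\uparrow = U^\alpha_i\, dx^i \otimes \frac{\partial}{\partial \dot x{}^\alpha}.
\]
(This follows directly from the defining property $U^\uparrow(X_D) = U(\sigma_D)^\uparrow$ applied to $D = \partial/\partial x^i$-type linear vector fields, together with $V = dx^i \otimes \partial/\partial \dot x{}^\alpha$ in the case $A = TM$, $U = \mathbb I_M$.) From this presentation, $\operatorname{im} U^\uparrow$ is the span of the vertical vector fields $U^\alpha_i\,\partial/\partial \dot x{}^\alpha$, which is the whole vertical bundle $\operatorname{span}\{\partial/\partial \dot x{}^\alpha\}$ exactly when the matrix $(U^\alpha_i)$ is invertible, i.e.\ $U$ is a bundle isomorphism; and $\ker U^\uparrow$ always contains the vertical bundle, with equality again precisely when $(U^\alpha_i)$ is invertible. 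So the almost tangent condition $\ker U^\uparrow = \operatorname{im} U^\uparrow$ is equivalent to $U$ being an isomorphism. Finally, for the integrability clause: since $\operatorname{im} U^\uparrow$ is vertical and the components $U^\alpha_i$ have internal degree $0$, a direct Frölicher–Nijenhuis bracket computation (or: the fact that $U^\uparrow$ factors through the $U$-relatedness with the vertical endomorphism $V$ on $A[1]$, and $[V,V]^{\mathrm{fn}} = 0$) shows $[U^\uparrow, U^\uparrow]^{\mathrm{fn}} = 0$ automatically whenever $U^\uparrow$ is an almost tangent structure — indeed any degree $-1$ $(1,1)$ tensor whose image is isotropic in the sense of squaring to zero, $U^\uparrow \circ U^\uparrow = 0$, which holds here because $\operatorname{im} U^\uparrow \subseteq \ker U^\uparrow$, has vanishing Nijenhuis torsion for degree reasons (the torsion would have internal degree $-2$, and there are no such tensors on an $\mathbb N$-manifold of degree $1$, exactly as invoked in the proof of Lemma~\ref{lem:delta_U}).

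Injectivity and surjectivity of the assignment $U \mapsto U^\uparrow$ are inherited from the already-established bijection between degree $-1$ $(1,1)$ tensors on $A[1]$ and bundle maps $TM \to A$; we only need that this bijection restricts to the sub-collections of interest, which is exactly what the previous paragraph establishes in both directions. The main obstacle I anticipate is purely bookkeeping: getting the coordinate formula $U^\uparrow = U^\alpha_i\, dx^i \otimes \partial/\partial\dot x{}^\alpha$ cleanly from the abstract characterization $U^\uparrow(X_D) = U(\sigma_D)^\uparrow$, and making sure the "no internal degree $-2$ tensors" argument is phrased so that it genuinely kills the Nijenhuis torsion rather than merely some of its components. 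Once that is in place the rest is immediate, so I would keep the write-up short: state the local formula, read off the equivalence $\ker = \operatorname{im} \iff (U^\alpha_i)$ invertible, and note that integrability is then automatic by the degree argument, hence adds no constraint.
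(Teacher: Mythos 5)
Your proposal is correct, but it takes a more computational route than the paper, and the comparison is worth recording. For the forward direction the paper argues conceptually: a bundle isomorphism $U$ induces a diffeomorphism $T[1]M \to A[1]$, and $U^\uparrow$ is the push-forward of the vertical endomorphism $V$ along it, hence an integrable almost tangent structure by transport of structure; you instead read everything off the local formula $U^\uparrow = U^\alpha_i\, dx^i \otimes \partial/\partial \dot x{}^\alpha$. For the converse the paper avoids coordinates entirely: it factors $U$ as a composition $TM \to T_M A[1] \to T_M A[1] \to A[1] \to A$ through $U^\uparrow$, notes that for degree reasons the vertical sub-bundle $A[1] \hookrightarrow T_M A[1]$ lies in $\ker U^\uparrow = \operatorname{im} U^\uparrow$, and deduces surjectivity of $U$, then injectivity by dimension count; your fibrewise linear-algebra argument ($\operatorname{im} U^\uparrow = \operatorname{im} U \subseteq \mathrm{vertical} \subseteq \ker U^\uparrow$, with equality throughout iff the matrix $(U^\alpha_i)$ is invertible) establishes the same equivalence and is, if anything, more transparent. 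One genuine addition on your side: you make explicit that the integrability clause $[U^\uparrow, U^\uparrow]^{\mathrm{fn}} = 0$ is vacuous, because this bracket is a vector-valued $2$-form of internal degree $-2$ on an $\mathbb N$-manifold of degree $1$ and such objects vanish identically (the form part has internal degree $\geq 0$ and the vector part $\geq -1$); the paper only obtains integrability indirectly via the push-forward of $V$, although the phrase ``necessarily integrable'' in the subsequent theorem confirms your reading. You were right to flag that the degree argument must be run for vector-valued $2$-forms rather than $(1,1)$ tensors — that is exactly the point where a careless transcription of the argument in the proof of the lemma on $\delta U$ would go wrong — and your aside about $U^\uparrow \circ U^\uparrow = 0$ is not needed once the degree count is in place.
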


\begin{proof}
Begin with a vector bundle isomorphism $U : TM \to A$. According to the remark preceding the statement, the $(1,1)$ tensor $U^\uparrow \in \Omega^1 (A[1], TA[1])$ corresponding to $U$ is also the push-forward of the vertical endomorphism $V \in \Omega^1 (T[1]M, TT[1]M)$ along the diffeomorphism $U : T[1]M \to A[1]$, hence it is an integrable almost tangent structure (of internal degree $-1$) itself. Conversely, take an integrable almost tangent structure of internal degree $-1$ on $A[1]$. It is of the form $U^\uparrow$ for some vector bundle map $U : TM \to A$, and $U$ is necessarily an isomorphism. Indeed, $U$ is the composition
\[
TM \longrightarrow T_M A[1] \overset{U^\uparrow}{\longrightarrow} T_M A[1] \longrightarrow A[1] \longrightarrow A,
\]
where $T_M A[1]$ is the restriction of $TA[1]$ to the zero section of $A[1] \to M$, the map $T_M A[1] \longrightarrow A[1] $ is the natural projection and the last arrow is the shift. But, for degree reasons, $A[1] \hookrightarrow T_M A[1]$ must be in the kernel of $U^\uparrow$, hence in its image and in the image of $U$ as well. We conclude that $U$ is surjective. The injectivity now follows by dimension reasons.
\end{proof}

In what follows, given an $\mathbb N$-manifold $\mathcal M$ of degree $1$ equipped with an almost tangent structure $V$ of degree $-1$, we will always identify $\mathcal M$ with a shifted tangent bundle $T[1] M$ and $V$ with the vertical endomorphism, unless otherwise stated. For instance, we will denote by $d_{dR}$ again the push forward of the usual De Rham differential on $T[1] M$ along the diffeomorphism $T[1] M \to \mathcal M$ induced by $V$. We are now ready to state the main result of this section.

\begin{theorem}\label{theor:L_dV}
A Lie algebroid $A \Rightarrow M$ is isomorphic to the Lie algebroid $(TM)_{\mathcal N}$ induced by a Nijenhuis operator $\mathcal N \in \Omega^1 (M, TM)$ on $M$ if and only if there exists a, necessarily integrable, almost tangent structure $V$ of internal degree $-1$ on $A[1]$ such that
\begin{equation}\label{eq:[V,[V,Q]]}
[[d_A, V]^{\mathrm{fn}}, V]^{\mathrm{fn}} = 0.
\end{equation}
In this case
\begin{equation}\label{eq:L_d_A V}
[d_A, V]^{\mathrm{fn}} = \mathcal N^{\mathrm{tan}}_{[1]}
\end{equation}
where $\mathcal N \in \Omega^1 (M, TM)$ is exactly the composition $\rho_A \circ U : TM \to TM$, $U : TM \to A$ is the vector bundle isomorphism such that $U^\uparrow = V$, and we are using the latter to identify $A[1]$ with $T[1]M$ (in order to take the tangent lift).
\end{theorem}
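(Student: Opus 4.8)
The plan is to put the data in a normal form and then reduce the whole statement to one Frölicher--Nijenhuis computation on $T[1]M$. By the Lemma preceding the statement, an almost tangent structure $V$ of internal degree $-1$ on $A[1]$ is automatically integrable and equals $U^\uparrow$ for a \emph{unique} vector bundle isomorphism $U : TM \to A$; conversely, an isomorphism $A \cong (TM)_{\mathcal N}$ supplies such a $U$. Using $U$ to identify $A$ with $TM$ as a vector bundle (so that $A[1] = T[1]M$ and $V$ becomes the vertical endomorphism), the Lie algebroid $A$ becomes a Lie algebroid structure on $TM$ whose anchor is the $(1,1)$-tensor $\mathcal N := \rho_A \circ U$ and whose bracket I denote $[-,-]_A$. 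Every object in the statement is natural under such identifications, so it suffices to work in this normal form; the goal becomes to show that $[-,-]_A = [-,-]_{\mathcal N}$ is equivalent to $[[d_A,V]^{\mathrm{fn}},V]^{\mathrm{fn}} = 0$.

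Since $V = \mathbb I_M^\uparrow$, the tensor $[d_A, V]^{\mathrm{fn}} = \mathcal L_{d_A}\mathbb I_M^\uparrow$ is a linear (internal degree $0$) $(1,1)$-tensor on $TM$, and the computation that yields the triple of $\mathcal L_{d_A}U^\uparrow$ in the proof of Lemma~\ref{lem:delta_U} (which is purely infinitesimal and uses no groupoid) identifies its triple as $(\mathcal D, \ell, T^M)$ with $\ell = \mathbb I_M \circ \rho_A = \mathcal N$, $T^M = \rho_A \circ \mathbb I_M = \mathcal N$, and $\mathcal D(X)(Y) = [X,Y]_A - [\mathcal N X, Y]$. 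Comparing with Example~\ref{ex:N^tan}, whose triple for $\mathcal N^{\mathrm{tan}}_{[1]}$ is $([-,\mathcal N]^{\mathrm{fn}},\mathcal N,\mathcal N)$ with $[X,\mathcal N]^{\mathrm{fn}}(Y) = [X,\mathcal N Y] - \mathcal N[X,Y]$, I get
\[
[d_A, V]^{\mathrm{fn}} = \mathcal N^{\mathrm{tan}}_{[1]} + Q,
\]
where $Q$ is the linear $(1,1)$-tensor with triple $(S(\cdot,-),0,0)$ and $S(X,Y) := [X,Y]_A - [X,Y]_{\mathcal N}$. A one-line check (both brackets have anchor $\mathcal N$ and satisfy the Leibniz rule) shows $S$ is $C^\infty(M)$-bilinear and antisymmetric, hence $S \in \Omega^2(M,TM)$ genuinely; in particular $[d_A,V]^{\mathrm{fn}} = \mathcal N^{\mathrm{tan}}_{[1]}$ exactly when $[-,-]_A = [-,-]_{\mathcal N}$, which is Formula~\eqref{eq:L_d_A V}.

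Now take the Frölicher--Nijenhuis bracket with $V$, using two ingredients. First, $[\mathcal N^{\mathrm{tan}}_{[1]}, V]^{\mathrm{fn}} = 0$: under $A[1] = T[1]M$, $\mathcal N^{\mathrm{tan}}_{[1]}$ is the complete (tangent) lift of $\mathcal N$ and $V$ the vertical lift of $\mathbb I_M$, so their bracket is the vertical lift of $[\mathcal N,\mathbb I_M]^{\mathrm{fn}}$, which vanishes because the Frölicher--Nijenhuis bracket of any vector valued $1$-form with $\mathbb I_M$ is zero (equivalently, a short coordinate check on $T[1]M$). Second, a direct computation in coordinates $x^i,\dot x{}^i$ on $T[1]M$ (with $Q = S^i{}_{jk}\,\dot x{}^k\,dx^j\otimes\partial_{\dot x{}^i}$ and $V = dx^l\otimes\partial_{\dot x{}^l}$) shows that $[Q,V]^{\mathrm{fn}}$ is, up to a nonzero scalar, the vector valued $2$-form of internal degree $-1$ on $T[1]M$ corresponding to $S$; in particular $[Q,V]^{\mathrm{fn}} = 0$ if and only if $S = 0$. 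Therefore
\[
[[d_A,V]^{\mathrm{fn}},V]^{\mathrm{fn}} = [Q,V]^{\mathrm{fn}} = 0 \iff S = 0 \iff [-,-]_A = [-,-]_{\mathcal N}.
\]

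This settles both implications. If $A \cong (TM)_{\mathcal N}$ with $\mathcal N$ Nijenhuis, then in the normal form $[-,-]_A = [-,-]_{\mathcal N}$, so $Q = 0$, the condition holds, and $[d_A,V]^{\mathrm{fn}} = \mathcal N^{\mathrm{tan}}_{[1]}$. Conversely, if $[[d_A,V]^{\mathrm{fn}},V]^{\mathrm{fn}} = 0$, then $[-,-]_A = [-,-]_{\mathcal N}$, so $A = (TM,[-,-]_{\mathcal N},\mathcal N)$ as Lie algebroids; since this is a Lie algebroid, $\mathcal N$ must be a Nijenhuis operator (the standard fact that $(TM,[-,-]_{\mathcal N},\mathcal N)$ is a Lie algebroid if and only if $T_{\mathcal N}=0$), hence $A = (TM)_{\mathcal N}$ and again $[d_A,V]^{\mathrm{fn}} = \mathcal N^{\mathrm{tan}}_{[1]}$ with $\mathcal N = \rho_A\circ U$. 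The one genuinely computational step is the identification of $[Q,V]^{\mathrm{fn}}$ in the third paragraph; there the only subtlety is bookkeeping the Koszul signs produced by the odd fiber coordinates $\dot x{}^i$, everything else being formal manipulation of triples and of the graded Frölicher--Nijenhuis calculus.
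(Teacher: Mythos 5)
Your proposal is correct and follows essentially the same route as the paper: identify $V$ with the vertical endomorphism via $U$, compute the triple of $[d_A,V]^{\mathrm{fn}}$ from Lemma \ref{lem:delta_U}, compare with Example \ref{ex:N^tan}, and use the vanishing of the Fr\"olicher--Nijenhuis bracket of $V$ with a tangent lift. Your explicit decomposition $[d_A,V]^{\mathrm{fn}}=\mathcal N^{\mathrm{tan}}_{[1]}+Q$, with $Q$ encoding the tensorial difference $S=[-,-]_A-[-,-]_{\mathcal N}$ and $[Q,V]^{\mathrm{fn}}$ recovering $S$ up to a nonzero scalar, is just a cleaner packaging of the local-coordinate check by which the paper deduces \eqref{eq:L_d_A V} from \eqref{eq:[V,[V,Q]]} and then reverses the computation \eqref{eq:comput}.
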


\begin{proof}
We stress that, as the total degree of $V\in \Omega^1 (A[1], TA[1])$ is $0$, the identity (\ref{eq:[V,[V,Q]]}) is not trivial. Now, let $\mathcal N$ be a Nijenhuis operator on $M$, and let $(TM)_{\mathcal N}$ be the associated Lie algebroid structure on $TM$. We denote by $d_{\mathcal N}$ the corresponding homological vector field on $T[1] M$. We first prove (\ref{eq:L_d_A V}). We have to prove that
\begin{equation}\label{eq:L_dV}
\mathcal L_{d_{\mathcal N}} V = \mathcal N^{\mathrm{tan}}_{[1]}.
\end{equation}
As both sides are $(1,1)$ tensors of internal degree $0$ on $T[1]M$, it is enough to show that they correspond to the same triple $(\mathcal D, \ell, T^M)$. From Example \ref{ex:N^tan} we have to show that the triple $(\mathcal D, \ell, T^M)$ corresponding to the left hand side is $([-,\mathcal N]^{\mathrm{fn}}, \mathcal N, \mathcal N)$. This easily follows from Lemma \ref{lem:delta_U} by using that $V = \mathbb I_M^\uparrow$. However, beware that, although, in this case, $a$ in (\ref{eq:D,l,T,deltaU}) is an ordinary vector field, the Lie derivative $\mathcal L^A$ appearing therein is not the ordinary Lie derivative (but the Lie algebroid one). In this particular case they agree, indeed for all $Y, X \in \mathfrak X (M)$
\begin{equation}\label{eq:comput}
\begin{aligned}
\mathcal D (Y) (X) & = \big(\mathcal L_Y^{(TM)_{\mathcal N}} \mathbb I_M\big) X =  [Y, \mathbb I_M X]_{\mathcal N} - \mathbb I_M [\mathcal NY, X] \\ & = [\mathcal NY, X] + [Y, \mathcal N X] - \mathcal N [ Y, X] - [\mathcal NY, X] \\
&  = (\mathcal L_Y \mathcal N)(X)
 = [Y, \mathcal N]^{\mathrm{fn}} (X).
\end{aligned}
\end{equation}
We still need to show that $[[d_{\mathcal N}, V]^{\mathrm{fn}}, V]^{\mathrm{fn}} = 0$. This follows from (\ref{eq:L_dV}) and the fact that the Fr\"olicher-Nijenhuis bracket of the vertical endomorphism $V$ and the tangent lift of a $(1,1)$ tensor always vanishes identically (a long but straightforward computation, e.g., in local coordinates). This concludes the ``only if part'' of the proof.

Conversely, let $A \Rightarrow M$ be a Lie
 algebroid and let $V$ be an almost tangent 
 structure of internal degree $-1$ on $A[1]$. 
 We use it to identify $A$ with the tangent 
 bundle $TM$ and $V$ with the vertical 
 endomorphism. In this way, the anchor $\rho_A$ identifies with a $(1,1)$ tensor $\mathcal N$. Now, first check that (\ref{eq:[V,[V,Q]]}) implies (\ref{eq:L_dV}) (use again local coordinates). Finally, reverse the computations (\ref{eq:comput}), and use Lemma \ref{lem:delta_U}, to see that (\ref{eq:L_dV}) actually implies $[-,-]_A = [-,-]_{\mathcal N}$. This concludes the proof.
\end{proof}

There is an elegant alternative way to formulate Theorem \ref{theor:L_dV}. Namely, condition (\ref{eq:L_dV}) is actually equivalent to the simpler condition
\[
[d_{dR}, d_A] = 0.
\]
To see this, remember that vector fields on $T[1]M$ are derivations of the graded algebra $\Omega^\bullet (M)$ of differential forms on $M$. Derivations commuting with the De Rham differential are exactly those of the form $\mathcal L_K$ for some vector valued form $K \in \Omega^\bullet (M, TM)$. We conclude that a degree $1$ derivation $Q$ of $\Omega^\bullet (M)$ satisfies $[d_{dR}, Q] = 0$ if and only if $Q = \mathcal L_{\mathcal N}$ for some $(1,1)$ tensor $\mathcal N \in \Omega^1 (M, TM)$. In this case, from the properties of the Fr\"olicher-Nijenhuis bracket, $Q$ is a homological derivation if and only if $\mathcal N$ is a Nijenhuis operator, in which case $(T[1]M, Q)$ is also the $\mathcal N Q$-manifold of degree $1$ corresponding to the Lie algebroid $(TM)_{\mathcal N}$. This proves the following

\begin{theorem}\label{theor:L_dV}
A Lie algebroid $A \Rightarrow M$ is isomorphic to the Lie algebroid $(TM)_{\mathcal N}$ induced by a Nijenhuis operator $\mathcal N \in \Omega^1 (M, TM)$ on $M$ if and only if there exists a, necessarily integrable, almost tangent structure $V$ of internal degree $-1$ on $A[1]$ such that
\[
[d_{dR}, d_A] = 0.
\]
\end{theorem}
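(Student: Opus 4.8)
The statement differs from the previous one only in that the condition $[[d_A, V]^{\mathrm{fn}}, V]^{\mathrm{fn}} = 0$ has been replaced by $[d_{dR}, d_A] = 0$, and the plan is to prove this reformulation directly, the key input being the classical Frölicher--Nijenhuis classification of the derivations of $\Omega^\bullet(M)$. First I would fix an almost tangent structure $V$ of internal degree $-1$ on $A[1]$; it is automatically integrable, since vector valued $2$-forms on the degree $1$ $\mathbb N$-manifold $A[1]$ all have internal degree $\geq -1$ while $[V,V]^{\mathrm{fn}}$ has internal degree $-2$, so by the Lemma identifying integrable almost tangent structures of internal degree $-1$ on $A[1]$ with vector bundle isomorphisms $TM \to A$ we get $V = U^\uparrow$ for a unique isomorphism $U : TM \to A$. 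Using $U$ I identify $A[1]$ with $T[1]M$, $V$ with the vertical endomorphism, and $d_{dR}$ with the ordinary De Rham differential on $T[1]M$; then $d_A$ becomes a degree $1$ vector field on $T[1]M$, equivalently a degree $1$ derivation of the graded algebra $\Omega^\bullet(M)$. I would then invoke the standard facts that $d_{dR} = \mathcal L_{\mathbb I_M}$ for the identity $(1,1)$ tensor $\mathbb I_M$, that $[\mathcal L_K, d_{dR}] = 0$ for every vector valued form $K$, and conversely that any derivation of $\Omega^\bullet(M)$ commuting with $d_{dR}$ has the form $\mathcal L_K$ for a unique $K \in \Omega^\bullet(M, TM)$. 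Since $d_A$ has degree $1$, this yields: $[d_{dR}, d_A] = 0$ if and only if $d_A = \mathcal L_{\mathcal N}$ for a unique $(1,1)$ tensor $\mathcal N \in \Omega^1(M, TM)$.

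Assume now $d_A = \mathcal L_{\mathcal N}$. From $[\mathcal L_K, \mathcal L_L] = \mathcal L_{[K,L]^{\mathrm{fn}}}$, the injectivity of $K \mapsto \mathcal L_K$, and $T_{\mathcal N} = \tfrac12 [\mathcal N, \mathcal N]^{\mathrm{fn}}$, we get $d_A^2 = \tfrac12 [\mathcal L_{\mathcal N}, \mathcal L_{\mathcal N}] = \mathcal L_{T_{\mathcal N}}$, so $d_A$ is homological precisely when $\mathcal N$ is a Nijenhuis operator. It then remains to identify the resulting $\mathbb N Q$-manifold $(T[1]M, \mathcal L_{\mathcal N})$ with the one attached to the Lie algebroid $(TM)_{\mathcal N}$, i.e.\ to check $\mathcal L_{\mathcal N} = d_{(TM)_{\mathcal N}}$. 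Both are degree $1$ derivations of $\Omega^\bullet(M)$, hence determined by their action on the algebra generators $C^\infty(M)$ and $d C^\infty(M)$; on $f \in C^\infty(M)$ one has $\mathcal L_{\mathcal N} f = \langle df, \mathcal N(-)\rangle$, which is exactly $d_{(TM)_{\mathcal N}} f$ read through the anchor $\mathcal N$, and on a $1$-form $\omega$ the Cartan-type formula $\mathcal L_{\mathcal N}\omega = \iota_{\mathcal N} d\omega - d\,\iota_{\mathcal N}\omega$, with $\iota_{\mathcal N}$ the degree $0$ insertion operator of $\mathcal N$, expands upon pairing with $X, Y \in \mathfrak X(M)$ into
\[
(\mathcal L_{\mathcal N}\omega)(X, Y) = \mathcal N X\big(\omega(Y)\big) - \mathcal N Y\big(\omega(X)\big) - \omega\big([\mathcal N X, Y] + [X, \mathcal N Y] - \mathcal N [X, Y]\big),
\]
which is precisely $(d_{(TM)_{\mathcal N}}\omega)(X, Y)$ for the bracket $[X, Y]_{\mathcal N} = [\mathcal N X, Y] + [X, \mathcal N Y] - \mathcal N[X, Y]$. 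Hence $d_A = d_{(TM)_{\mathcal N}}$, that is $A \cong (TM)_{\mathcal N}$ as Lie algebroids. Conversely, if $A \cong (TM)_{\mathcal N}$ with $\mathcal N$ Nijenhuis, then choosing $V = \mathbb I_M^\uparrow$ and transporting $d_A$ to $T[1]M$ gives $d_A = \mathcal L_{\mathcal N}$, which commutes with $d_{dR}$ by the standard fact quoted above.

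The only step that genuinely requires computation is the identification $\mathcal L_{\mathcal N} = d_{(TM)_{\mathcal N}}$: it amounts to unwinding the Frölicher--Nijenhuis/Nijenhuis--Richardson calculus on $1$-forms while tracking the sign conventions, everything else being formal once the classification of $d_{dR}$-closed derivations is in hand. One could alternatively avoid it by first showing that, for a fixed $V$, the condition $[d_{dR}, d_A] = 0$ is equivalent to the condition $[[d_A, V]^{\mathrm{fn}}, V]^{\mathrm{fn}} = 0$ of the previous Theorem and then quoting that result, but the direct route above is cleaner and self-contained.
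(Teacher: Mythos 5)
Your proposal is correct and follows essentially the same route as the paper: the paper's argument for this reformulation is precisely the classification of degree $1$ derivations of $\Omega^\bullet(M)$ commuting with $d_{dR}$ as Lie derivatives $\mathcal L_{\mathcal N}$ along $(1,1)$ tensors, with homologicity equivalent to $T_{\mathcal N}=0$ via $[\mathcal L_K,\mathcal L_L]=\mathcal L_{[K,L]^{\mathrm{fn}}}$. The only difference is that you explicitly verify $\mathcal L_{\mathcal N}=d_{(TM)_{\mathcal N}}$ on functions and $1$-forms, a step the paper asserts without computation; your verification is correct and makes the argument self-contained.
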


We conclude this section noticing that there is a characterization of the De Rham differential on an $\mathbb N$-manifold of degree $1$ equipped with an almost tangent structure of internal degree $-1$. Namely, we have the following proposition that might be of independent interest.

\begin{proposition}
The De Rham differential $d_{dR}$ is the only homological vector field on $T[1] M$ such that 
\begin{equation}\label{eq:i_dRV}
\iota_{d_{dR}} V = \mathcal E
\end{equation}
where $\mathcal E \in \mathfrak X (T[1] M)$ is the Euler vector field.
\end{proposition}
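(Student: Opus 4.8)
The plan is to reduce everything to a local computation on $T[1]M$, treating existence and uniqueness separately. Fix coordinates $x^i$ on $M$ and let $\dot x^i$ be the associated internal-degree-$1$ fibre coordinates on $T[1]M$, so that
\[
d_{dR} = \dot x^i \frac{\partial}{\partial x^i}, \qquad \mathcal E = \dot x^i \frac{\partial}{\partial \dot x^i}, \qquad V = dx^i \otimes \frac{\partial}{\partial \dot x^i}.
\]
Existence is then immediate: $\iota_{d_{dR}} V = \langle dx^i, d_{dR}\rangle\, \frac{\partial}{\partial \dot x^i} = d_{dR}(x^i)\,\frac{\partial}{\partial \dot x^i} = \dot x^i \frac{\partial}{\partial \dot x^i} = \mathcal E$, and $d_{dR}$ is homological since $d_{dR}^2 = 0$.

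For uniqueness, let $Q$ be any homological vector field on $T[1]M$ with $\iota_Q V = \mathcal E$. Having internal degree $1$, it must be of the form $Q = f^i \frac{\partial}{\partial x^i} + g^i \frac{\partial}{\partial \dot x^i}$ with $f^i$ of internal degree $1$, hence $f^i = a^i_j(x)\,\dot x^j$ for some functions $a^i_j$ on $M$, and $g^i$ of internal degree $2$. Inserting $Q$ into the $1$-form slot of $V$ gives $\iota_Q V = f^i \frac{\partial}{\partial \dot x^i}$, so the hypothesis forces $a^i_j = \delta^i_j$, i.e.\ $Q = d_{dR} + G$ where $G := g^i \frac{\partial}{\partial \dot x^i}$. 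Note that this first equation only constrains the part of $Q$ along $\frac{\partial}{\partial x^i}$; the remaining vertical component $G$ is so far unconstrained, and it is here that the homological condition has to be brought in.

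To kill $G$, I will test $Q^2 = 0$ against (pull-backs of) arbitrary functions $h \in C^\infty(M)$, which have internal degree $0$. Since $h$ does not involve the $\dot x^i$, we have $G(h) = g^i\frac{\partial h}{\partial \dot x^i} = 0$, whence $Q(h) = d_{dR}(h) = \dot x^i\frac{\partial h}{\partial x^i}$ and
\[
0 = Q^2(h) = Q\big(\dot x^i\tfrac{\partial h}{\partial x^i}\big) = d_{dR}\big(\dot x^i\tfrac{\partial h}{\partial x^i}\big) + G\big(\dot x^i\tfrac{\partial h}{\partial x^i}\big) = g^i\frac{\partial h}{\partial x^i},
\]
using $d_{dR}\big(\dot x^i\frac{\partial h}{\partial x^i}\big) = d_{dR}^2(h) = 0$ and $\frac{\partial}{\partial \dot x^j}\dot x^i = \delta^i_j$. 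Letting $h = x^k$ gives $g^k = 0$ for every $k$, so $Q = d_{dR}$, which is what we wanted.

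I do not expect a serious obstacle. The single delicate point is the bookkeeping of Koszul signs — in $\iota_Q V$ for the odd vector field $Q$, and when $G$ acts on the odd expression $\dot x^i\frac{\partial h}{\partial x^i}$ — but whatever sign conventions one adopts for the interior product, they are the same for every internal-degree-$1$ vector field; since $\iota_{d_{dR}} V = \mathcal E$ holds exactly as stated, the sign in $\iota_Q V = f^i\frac{\partial}{\partial \dot x^i}$ is pinned to $+$, and the argument goes through unchanged. Alternatively, one can phrase the whole uniqueness argument intrinsically: $\iota_Q V = \mathcal E$ says that $Q$ and the Euler field agree modulo $\operatorname{im} V = \ker V$, i.e.\ $Q - d_{dR}$ takes values in $\ker V$, and then $(d_{dR}+G)^2 = 0$ restricted to $C^\infty(M)$ forces $G = 0$; the coordinate computation above is simply the fastest way to see this.
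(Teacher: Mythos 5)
Your proof is correct and follows essentially the same route as the paper: both write $Q$ in the local normal form $Q = \rho^i_j \dot x{}^j \frac{\partial}{\partial x^i} + g^i \frac{\partial}{\partial \dot x{}^i}$ and read off $\rho^i_j = \delta^i_j$ from $\iota_Q V = \mathcal E$. The only cosmetic difference is in the last step: the paper kills the vertical part by invoking that the anchor (here the identity) is a Lie algebroid morphism onto the standard tangent algebroid, whereas you obtain the same conclusion by evaluating $Q^2$ on pullbacks of functions on $M$, which is precisely the anchor--bracket compatibility written out in coordinates.
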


\begin{proof}
Formula (\ref{eq:i_dRV}) can be proved easily, e.g.~in local coordinates. Now, suppose that there exists another homological vector field $Q$ on $T[1]M$ such that 
\begin{equation}\label{eq:i_QV}
\iota_Q V = \mathcal E.
\end{equation}
Then $Q$ gives to $TM$ the structure of a Lie algebroid whose anchor $\rho : TM \to TM$ is the identity $\mathbb I_M$. Indeed, locally
\[
Q = \rho^{i}_j \dot x{}^j \frac{\partial}{\partial x^i} - \frac{1}{2}c_{ij}^k \dot x^i \dot x^j \frac{\partial}{\partial \dot x{}^k}
\]
and the condition (\ref{eq:i_QV}) reads $
\rho^{i}_j = \delta^i_j$, i.e.~$\rho = \mathbb I_M$. As the anchor is a Lie algebroid map with values in the standard tangent bundle Lie algebroid, $Q$ must be the De Rham differential.
\end{proof}


\section{Nijenhuis Structures and Lie Groupoids}\label{Sec:3}

Let $G \rightrightarrows M$ be a Lie groupoid and let $A \Rightarrow M$ be its Lie algebroid. We say that $G$ \emph{integrates} a Nijenhuis operator $\mathcal N \in \Omega^1 (M, TM)$ on $M$ if there exists a Lie algebroid isomorphism $A \cong (TM)_{\mathcal N}$. In the next theorem we characterize Lie groupoids integrating a Nijenhuis operator. 

\begin{theorem}\label{theor:delta_U}
A Lie groupoid $G \rightrightarrows M$ with Lie algebroid $A \Rightarrow M$ integrates a Nijenhuis operator $\mathcal N \in \Omega^1 (M, TM)$ on $M$ if and only if there exists a vector bundle map $U : TM \to A$ such that 
\begin{enumerate}
\item $\ker \overrightarrow U = T^t G$ and $\operatorname{im} \overrightarrow U = T^s G$,
\item $[\overrightarrow U, \overrightarrow U]^{\mathrm{fn}} = 0$, i.e. $\overrightarrow U$ is a Nijenhuis operator on $G$.
\end{enumerate}
In this case, put $\mathcal N = s_\ast \delta U = t_\ast \delta U$. Then $\mathcal N$ is a Nijenhuis operator on $M$, and $U : A \to (TM)_{\mathcal N}$ is a Lie algebroid isomorphism. Finally, if we use $U$ to identify $A$ and $(TM)_{\mathcal N}$, we also have
\begin{equation}\label{eq:delta_U}
\dot{\delta U} = \mathcal N^{\mathrm{tan}}.
\end{equation}
\end{theorem}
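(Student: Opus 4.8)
The plan is to reduce the global statement to the infinitesimal Theorem~\ref{theor:L_dV} via Lemma~\ref{lem:delta_U}, and to translate conditions (1)--(2) on $\overrightarrow U$ into the three requirements of that theorem: that $U$ be a vector bundle \emph{isomorphism}, that $V := U^\uparrow$ be an almost tangent structure, and that $[[d_A, V]^{\mathrm{fn}}, V]^{\mathrm{fn}} = 0$. The first step is to analyze the linear-algebraic content of condition (1). Using the local formula $\overrightarrow U = t^\ast(U^\alpha) \otimes \overrightarrow u_\alpha$ from (\ref{right_U_loc}) — or, more invariantly, the pointwise description $\overrightarrow U_g = dR_g \circ U_{t(g)} \circ dt$ from (\ref{eq:arrow_U}) — one sees that $\ker \overrightarrow U_g = \ker dt_g = T^t_g G$ automatically forces $U_{t(g)}$ to be injective, hence (since $\dim TM = \dim A$ only if... actually one needs $\operatorname{rank} A = \dim M$, which also drops out) an isomorphism; conversely $\operatorname{im}\overrightarrow U_g = dR_g(\operatorname{im} U_{t(g)})$, and since $dR_g$ maps $A_{t(g)} = T^s_{u(t(g))}G$ isomorphically onto $T^s_g G$, the condition $\operatorname{im}\overrightarrow U = T^s G$ is equivalent to $\operatorname{im} U = A$, i.e.\ surjectivity. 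So I would first prove: \emph{$\overrightarrow U$ satisfies (1) if and only if $U : TM \to A$ is a vector bundle isomorphism}. Granting this, $V := U^\uparrow$ is, by the Lemma preceding Theorem~\ref{theor:L_dV}, an integrable almost tangent structure of internal degree $-1$ on $A[1]$, and conversely every such arises this way.

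The second and main step is to show that condition (2), $[\overrightarrow U, \overrightarrow U]^{\mathrm{fn}} = 0$, is equivalent to the DG-condition $[[d_A, V]^{\mathrm{fn}}, V]^{\mathrm{fn}} = 0$ of Theorem~\ref{theor:L_dV}. Here is where Lemma~\ref{lem:delta_U} does the work. By that Lemma, $\dot{(\delta U)}{}^{[1]} = \mathcal L_{d_A} U^\uparrow = [d_A, V]^{\mathrm{fn}}$; that is, the IM tensor associated to the multiplicative $(1,1)$-tensor $\delta U = \overrightarrow U + \overleftarrow U$ on $G$ is precisely $[d_A, V]^{\mathrm{fn}}$. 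Now I would invoke the naturality of the Fr\"olicher--Nijenhuis bracket under differentiation: the assignment sending a multiplicative $(1,1)$-tensor $T$ on $G$ to its IM tensor $\dot T$ is a morphism of graded Lie algebras for the respective Fr\"olicher--Nijenhuis brackets, so $\dot{[\delta U, \delta U]^{\mathrm{fn}}} = [\dot{\delta U}, \dot{\delta U}]^{\mathrm{fn}} = [[d_A,V]^{\mathrm{fn}},[d_A,V]^{\mathrm{fn}}]^{\mathrm{fn}}$. But one must connect this to $[\overrightarrow U, \overrightarrow U]^{\mathrm{fn}}$, not to $[\delta U, \delta U]^{\mathrm{fn}}$, and to $[[d_A,V]^{\mathrm{fn}},V]^{\mathrm{fn}}$, not $[[d_A,V]^{\mathrm{fn}},[d_A,V]^{\mathrm{fn}}]^{\mathrm{fn}}$. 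The bridge is the observation (already flagged in the introduction) that on a degree-$1$ $\mathbb N$-manifold with $V$ of internal degree $-1$ and $V^2 = 0$, one has $[[d_A,V]^{\mathrm{fn}}, V]^{\mathrm{fn}} = \tfrac12[[d_A, [V,V]^{\mathrm{fn}}]^{\mathrm{fn}}, \ldots]$ — better, use the graded Jacobi identity together with $[V,V]^{\mathrm{fn}}=0$ to get $[[d_A,[d_A,V]^{\mathrm{fn}}]^{\mathrm{fn}},\ \cdot\ ]$ in terms of $[d_A,d_A]^{\mathrm{fn}} = 0$; concretely, from $[d_A, \cdot]^{\mathrm{fn}}$ being a derivation of the FN bracket and $d_A^2 = 0$, $[d_A, [[d_A,V]^{\mathrm{fn}},V]^{\mathrm{fn}}]^{\mathrm{fn}} = [[d_A,V]^{\mathrm{fn}},[d_A,V]^{\mathrm{fn}}]^{\mathrm{fn}}$, and since $[d_A,\cdot]^{\mathrm{fn}}$ is injective on internal-degree reasons in the relevant degree (a DG-manifold on $A[1]$ has no nontrivial closed vector-valued forms of the wrong degree that are also exact-free), vanishing of $[[d_A,V]^{\mathrm{fn}},V]^{\mathrm{fn}}$ is equivalent to vanishing of its $d_A$-image. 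On the groupoid side, the analogous identity: since $\overleftarrow U$ and $\overrightarrow U$ commute (left- and right-invariant objects are compatible), $[\delta U, \delta U]^{\mathrm{fn}} = [\overrightarrow U, \overrightarrow U]^{\mathrm{fn}} + [\overleftarrow U, \overleftarrow U]^{\mathrm{fn}}$, and $[\overleftarrow U,\overleftarrow U]^{\mathrm{fn}}$ is the "left" analogue, which vanishes iff $[\overrightarrow U,\overrightarrow U]^{\mathrm{fn}}$ does (they differ by the inversion map). Hence (2) $\iff$ $[\delta U,\delta U]^{\mathrm{fn}}=0$ $\iff$ (by differentiation, using that $G$-multiplicativity of $[\delta U,\delta U]^{\mathrm{fn}}$ means it vanishes iff its IM part does) $[[d_A,V]^{\mathrm{fn}},[d_A,V]^{\mathrm{fn}}]^{\mathrm{fn}}=0$ $\iff$ $[[d_A,V]^{\mathrm{fn}},V]^{\mathrm{fn}}=0$.

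With both steps in hand, the equivalence in the theorem follows by chaining: $G$ integrates some Nijenhuis $\mathcal N$ $\iff$ $A \cong (TM)_{\mathcal N}$ for some Nijenhuis $\mathcal N$ $\iff$ (Theorem~\ref{theor:L_dV}) $A[1]$ carries an integrable almost tangent $V$ of internal degree $-1$ with $[[d_A,V]^{\mathrm{fn}},V]^{\mathrm{fn}}=0$ $\iff$ there is $U : TM \to A$ with (1) and (2). For the "in this case" assertions: by Lemma~\ref{lem:delta_U}, $(\delta U)^M = \rho_A \circ U =: \mathcal N$, and this is the $M$-projection of $\delta U$ along both $s$ and $t$ (the Lemma's computation of the projections of $\overrightarrow U, \overleftarrow U$ gives $s_\ast \delta U = t_\ast \delta U = \rho_A\circ U$), so $\mathcal N = s_\ast\delta U = t_\ast\delta U$ is well-defined; that $\mathcal N$ is Nijenhuis and $U$ is a Lie algebroid isomorphism onto $(TM)_{\mathcal N}$ is exactly the content of the "in this case" part of Theorem~\ref{theor:L_dV}, transported through $U$. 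Finally (\ref{eq:delta_U}): having identified $A = (TM)_{\mathcal N}$ via $U$, we have $V = \mathbb I_M^\uparrow$, so $\dot{\delta U} = [d_{\mathcal N}, V]^{\mathrm{fn}}$ at the level of $(A[1])$-tensors, which by (\ref{eq:L_dV}) in the proof of Theorem~\ref{theor:L_dV} equals $\mathcal N^{\mathrm{tan}}_{[1]}$; unwinding the identification $T^{\mathrm{tan}} \leftrightarrow T^{\mathrm{tan}}_{[1]}$ between linear tensors on $TM$ and internal-degree-$0$ tensors on $T[1]M$ yields $\dot{\delta U} = \mathcal N^{\mathrm{tan}}$. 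The main obstacle I anticipate is the bookkeeping in the second step — specifically, cleanly justifying the two "$[X,X]^{\mathrm{fn}} = 0 \iff$ (its $d$-image) $= 0$" reductions and the claim that a multiplicative tensor (here $[\delta U,\delta U]^{\mathrm{fn}}$) vanishes iff its IM differentiation vanishes — which needs either source-connectedness of $G$ or a direct local argument via (\ref{right_U_loc}); a short lemma or a local-coordinate computation is probably the cleanest route.
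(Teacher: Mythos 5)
Your overall architecture (reduce to Theorem \ref{theor:L_dV} via Lemma \ref{lem:delta_U}; show condition \emph{(1)} is equivalent to $U$ being an isomorphism) is sound, and your first step and the ``in this case'' assertions match what the paper does. But the central step --- translating condition \emph{(2)} into the DG condition $[[d_A,V]^{\mathrm{fn}},V]^{\mathrm{fn}}=0$ --- has a genuine gap. Your reduction hinges on the claim that $\mathcal L_{d_A}=[d_A,-]^{\mathrm{fn}}$ is injective ``for internal-degree reasons'' on the relevant piece of $\Omega^2(A[1],TA[1])$, so that $[[d_A,V]^{\mathrm{fn}},V]^{\mathrm{fn}}=0$ would follow from the vanishing of its $\mathcal L_{d_A}$-image $\pm[[d_A,V]^{\mathrm{fn}},[d_A,V]^{\mathrm{fn}}]^{\mathrm{fn}}$. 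This is unjustified and false in general: the element $[[d_A,V]^{\mathrm{fn}},V]^{\mathrm{fn}}$ has internal degree $-1$, i.e.\ it is a \emph{core} vector-valued $2$-form, hence corresponds to some $\Theta\in\Omega^2(M,A)$ (in fact, essentially the $A$-torsion $T^A_U$ of Remark \ref{rem:A-torsion}), and $\mathcal L_{d_A}$ has a nontrivial kernel on core tensors (already for $k=1$: any $U$ with $\rho_A\circ U=U\circ\rho_A=0$ and $\mathcal L^A_aU=0$ gives a closed, nonzero $U^\uparrow$). So ``vanishing of the $d_A$-image'' does not imply vanishing. A second, related gap is the backwards passage $[\delta U,\delta U]^{\mathrm{fn}}=0\Rightarrow[\overrightarrow U,\overrightarrow U]^{\mathrm{fn}}=0$: since $[\delta U,\delta U]^{\mathrm{fn}}=\overrightarrow{T^A_U}+\overleftarrow{T^A_U}$, restricting to units only yields that $T^A_U$ takes values in the isotropy (one gets $a=-di(a)$, i.e.\ $a\in\ker ds\cap\ker dt$), not that it vanishes; and the step ``a multiplicative tensor vanishes iff its IM part does'' needs source-connectedness, which the theorem does not assume. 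You flag this yourself, but it is not a bookkeeping issue --- it is where the argument breaks.

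The paper sidesteps all of this by never forming $[\delta U,\delta U]^{\mathrm{fn}}$ in the converse direction: it works directly with $\overrightarrow U=t^\ast(U^\alpha)\otimes\overrightarrow{u_\alpha}$ and computes $[\overrightarrow U,\overrightarrow U]^{\mathrm{fn}}$ in local coordinates, finding $[\overrightarrow U,\overrightarrow U]^{\mathrm{fn}}=t^\ast\big(c_{ij}^k\,dx^i\wedge dx^j+2\,dx^i\wedge d\mathcal N_i^k\big)\otimes\overrightarrow{\partial_k}$, which is a \emph{right-invariant} object and hence vanishes if and only if $c^k_{ij}=\partial_i\mathcal N^k_j-\partial_j\mathcal N^k_i$, i.e.\ if and only if $[-,-]_A=[-,-]_{\mathcal N}$ with $\mathcal N=\rho_A\circ U$ (equivalently $T^A_U=0$, cf.\ Remark \ref{rem:A-torsion}). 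This handles both directions at once, with no connectedness hypothesis and no appeal to the functoriality of the Fr\"olicher--Nijenhuis bracket under differentiation. If you want to keep your DG-theoretic route, you should replace the injectivity argument by the direct identification $[\overrightarrow U,\overrightarrow U]^{\mathrm{fn}}=\overrightarrow{T^A_U}$ together with the observation that $T^A_U=0$ is literally the statement $[-,-]_A=[-,-]_{\rho_A\circ U}$ once $U$ is used to identify $A$ with $TM$; the detour through $[\delta U,\delta U]^{\mathrm{fn}}$ and its IM counterpart is both unnecessary and, as written, not a proof.
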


\begin{proof}
Let's first assume that $G$ integrates a Nijenhuis operator $\mathcal N \in \Omega^1 (M, TM)$, so that we can identify $A$ and $(TM)_{\mathcal N}$. Let $U = \mathbb I_M : TM \to TM$ be the identity. Then, locally
\[
\overrightarrow U = t^\ast (dx^i) \otimes \overrightarrow{\partial_i},
\]
for some local coordinates $(x^i)$ on $M$, where we put $\partial_i := \partial / \partial x^i$. As $\dim G = 2 \dim M$, point \emph{(1)} immediately follows. Next, compute
\[
\begin{aligned}
\big[\overrightarrow U, \overrightarrow U\big]^{\mathrm{fn}} & = \big[t^\ast (dx^i) \otimes \overrightarrow{\partial_i}, t^\ast (dx^j) \otimes \overrightarrow{\partial_j} \big]^{\mathrm{fn}}\\
& = t^\ast (dx^i \wedge dx^j) \otimes  \big[\overrightarrow{\partial_i},\overrightarrow{\partial_j} \big]^{\mathrm{fn}} + 2 t^\ast (dx^i) \wedge \mathcal L_{\overrightarrow{\partial_i}} t^\ast (dx^k) \otimes \overrightarrow{\partial_k},
\end{aligned}
\]
where we used standard properties of the Fr\"olicher Nijenhuis bracket (together with the fact that the coordinate $1$-forms are closed). Now we have
\[
\mathcal N \partial_i = \mathcal N^k_i \partial_k \quad \text{and} \quad \left[\partial_i, \partial_j\right]_{\mathcal N} = c_{ij}^k \partial_k = \left(\partial_i \mathcal N^k_j - \partial_j \mathcal N^k_i \right) \partial_k,
\]
for some local functions $\mathcal N^k_i, c_{ij}^k$. As 
\[
t_\ast \overrightarrow{\partial_i} = \mathcal N \partial_i =  \mathcal N^k_i \partial_k, 
\]
it follows that
\[
\big[\overrightarrow U, \overrightarrow U\big]^{\mathrm{fn}} = t^\ast \left(c_{ij}^k dx^i \wedge dx^j + 2 dx^i \wedge d\mathcal N_i^k \right) \otimes \overrightarrow{\partial_k} = \left( c_{ij}^k - \partial_i \mathcal N^k_j + \partial_j \mathcal N^k_i \right)  \otimes \overrightarrow{\partial_k} = 0,
\]
as claimed. A very similar computation shows that $[\overleftarrow U, \overleftarrow U]^{\mathrm{fn}} = 0$ as well. It is even easier to check that $[\overrightarrow U, \overleftarrow U]^{\mathrm{fn}} = 0$. It follows that 
\[
\big[\delta U, \delta U\big]^{\mathrm{fn}} = \big[\overrightarrow U + \overleftarrow U, \overrightarrow U + \overleftarrow U\big]^{\mathrm{fn}} =  \big[\overrightarrow U, \overrightarrow U\big]^{\mathrm{fn}} + 2\big[\overrightarrow U, \overleftarrow U\big]^{\mathrm{fn}} + \big[\overleftarrow U, \overleftarrow U\big]^{\mathrm{fn}} = 0.
\]
Formula (\ref{eq:delta_U}) now follows from Lemma \ref{lem:delta_U} (and Example \ref{ex:N^tan}).

Conversely, let $U : TM \to A$ be a vector bundle map satisfying \emph{(1)} and \emph{(2)}. It follows from $\ker \overrightarrow U = \ker T^t G$, resp.~$\operatorname{im} \overrightarrow U = \ker T^s G$, by restriction to the units, that $U$ is injective, resp.~surjective. Hence $U$ is a vector bundle isomorphism that we can use to identify $A$ with $TM$, as vector bundles. Similarly, we identify the anchor map $\rho_A : A \to TM$ with a $(1,1)$ tensor $\mathcal N \in \Omega^1 (M, TM)$. From \emph{(2)}, reversing the above coordinate computation, one now sees that $A = (TM)_{\mathcal N}$.
\end{proof}

\begin{remark}\label{rem:A-torsion}
Let $G \rightrightarrows M$ be a Lie groupoid, let $A \Rightarrow M$ be its Lie algebroid, and let $U : TM \to A$ be a vector bundle map. Using, e.g.~(\ref{right_U_loc}), we immediately see that condition \emph{(1)} in Theorem \ref{theor:delta_U} is actually equivalent to $U$ being a vector bundle isomorphism. Condition \emph{(2)} can also be expressed purely in terms of $U$ (and independently of condition \emph{(1)}). To do this, we define the \emph{$A$-torsion} of $U$ to be $A$-valued $2$-form $T^A_U \in \Omega^2 (M, A)$ given by
\[
T^A_U (X, Y) := [UX, UY]_A + U\rho_AU[X,Y] - U[\rho_AUX, Y] -U [X, \rho_AUY].
\]
Now, given an $A$-valued $2$-form $T \in \Omega^2 (M, A)$, the obvious adaptation of Formulas (\ref{eq:arrow_U}) defines vector valued $2$-forms $\overrightarrow T, \overleftarrow T \in \Omega^2 (G, TG)$ on $G$. We also have that $\overrightarrow T|_{TM} = T$ and $\overleftarrow T|_{TM} = di \circ T$. Finally, a long but easy computation exploiting, e.g., (\ref{right_U_loc}), shows that
\[
T_{\overrightarrow U} = \overrightarrow{T^A_U}, \quad \text{and} \quad T_{\overleftarrow U} = \overleftarrow{T^A_U}
\]
We leave the details to the reader. We conclude that condition \emph{(2)} of Theorem \ref{theor:delta_U} is equivalent to
\[
T^A_U = 0.
\]
\end{remark}

We conclude this section discussing a few elementary examples.

\begin{example}[Trivial Nijenhuis operators]
Let $G$ integrate a Njenhuis operator $\mathcal N \in \Omega^1 (M, TM)$, and let $U$ be as in the statement of Theorem \ref{theor:delta_U}. Clearly, $\overrightarrow U \in \Omega^1 (G, TG)$ is not an almost tangent structure, unless $\ker T^s G = \ker T^t G$ which happens exactly when $s = t$, i.e.~$G$ is a bundle of Lie groups. But in this case, $\mathcal N = 0$ necessarily, so $G$ is a bundle of Lie groups integrating the trivial Lie algebroid structure $(TM)_0$ on $TM$. The source simply connected such $G$ is $TM \rightrightarrows M$ with both source and target being the canonical projection $TM \to M$, and the multiplication being the fiber-wise addition. We denote this bundle of abelian Lie groups $(TM)_+$. If $U : TM \to TM$ is the identity map then $\overrightarrow U = V \in \Omega^1 (TM, TTM)$ is the vertical endomorphism (use, e.g., local coordinates) and $\overleftarrow U = - V$, so that $\delta U = \overrightarrow U + \overleftarrow U = 0$ (also in agreement with (\ref{eq:delta_U})).

On the other hand let $G$ be a source connected \emph{proper} Lie groupoid integrating $(TM)_0$. Actually $G$ is a torus bundle and there must be a Lie groupoid map $p : (TM)_+ \to G$. But 1) $p$ is a local diffeomorphism, 2) the vertical endomorphism $V$ on $(TM)_+$ descends to $G$ and 3) $\Lambda := \ker p$ is a lattice in $TM$. So translations along sections of $\Lambda$ preserve $V$. But, actually, $V$ is preserved by a translation along any section of $TM \to M$. It follows that every lattice in $TM$ arises in this way. We conclude that the present situation is very different from that of the integration of the trivial Poisson structure by a proper symplectic groupoid which gives very special lattices in $T^\ast M$ (hence in $TM$), namely those corresponding to integral affine structures on $M$ (see \cite{PMCTII} for details).
\end{example}

\begin{example}[Invertible Nijenhuis operators]
Let $M$ be a manifold and let $\mathcal N$ be an invertible Nijenhuis operator on $M$ (e.g.~a complex structure). Then $\mathcal N$ itself is an isomorphism identifying $(TM)_{\mathcal N}$ and  $(TM)_{\mathbb I}$ where $\mathbb I : TM \to TM$ is the identical $(1,1)$ tensor. In the following we will understand this identification. The Lie algebroid $(TM)_{\mathbb I}$ is the usual tangent Lie algebroid which is integrated (among others) by the pair groupoid $M \times M \rightrightarrows M$. The identity $\mathbb I : TM \to TM$ can also be seen as a $(TM)_{\mathbb I}$-valued $1$-form on $M$. If we do so, then   
\[
\overrightarrow{\mathbb I}, \overleftarrow{\mathbb I} : T(M \times M) = TM \times TM \to T(M \times M) = TM \times TM
\]
are the projections onto the first and the second factor respectively, so that $\delta \mathbb I = \overrightarrow{\mathbb I} + \overleftarrow{\mathbb I}$ is the identity of $T(M \times M)$ (in agreement with (\ref{eq:delta_U})).
\end{example}

\begin{example}[Rank $1$ Lie algebroids on a $1$ dimensional manifold]
Let $M$ be either the line $\mathbb R$ or the circle $S^1$ and let $\theta$ be the canonical coordinate on $M$. Any Lie algebroid structure $A$ on the trivial line bundle $\mathbb R_M$ is of the following type:
\[
[f, g]_A = f X(g) - X(g) f, \quad \rho_A (f) = fX,\quad f,g \in \Gamma (A) = C^\infty (M),
\]
for some vector field $X \in \mathfrak X (M)$. Denote $F := X (\theta)$, so that $X = F \frac{\partial}{\partial \theta}$. The vector bundle isomorphism
\begin{equation}\label{eq:iso_1}
\mathbb R_M \to TM, \quad f \mapsto f \frac{\partial}{\partial \theta}
\end{equation}
identifies $A$ with the Lie algebroid $(TM)_{\mathcal N}$, where $\mathcal N$ is the Nijenhuis operator given by
\[
\mathcal N = d\theta \otimes X = F \mathbb I.
\]
Now, in order to illustrate Theorem \ref{theor:delta_U}, we take the long route to prove this latter thing. Recall that $A$ is integrated by the Lie groupoid $D^X \rightrightarrows M$, where $D^X \subseteq \mathbb R \times M$ is the domain of the flow of $X$:
\[
\phi^X : D^X \to M, \quad (\varepsilon, \theta) \mapsto \phi^X_\varepsilon (\theta).
\]
The source $s : D^X \to M$ is the projection onto the second factor, while the target $t$ is $\phi^X$. Two arrows $(\bar \varepsilon, \bar \theta), (\varepsilon, \theta) \in D^X$ are composable when $\bar \theta = \phi^X_{\varepsilon} (\theta)$ and, in this case, their product is
\[
(\bar \varepsilon, \bar \theta) \cdot (\varepsilon, \theta) = (\bar \varepsilon + \varepsilon, \theta).
\]
The inversion $i : D^X \to D^X$ maps $(\varepsilon, \theta)$ to $(-\varepsilon, \phi_\varepsilon^X (\theta))$.

The inverse of the isomorphism (\ref{eq:iso_1}) is
\[
U = d\theta \otimes u : TM \to \mathbb R_M
\]
where we denoted by $u$ the constant function $1$. A straightforward computation shows that
\[
\overrightarrow U = t^\ast (d\theta) \otimes \overrightarrow u = d\phi^X \otimes \frac{\partial}{\partial \varepsilon} \quad \text{and} \quad \overleftarrow U = s^\ast (d\theta) \otimes \overleftarrow u = d\theta \otimes \left(i^\ast \left( \frac{\partial \phi^X}{\partial \varepsilon}\right) \frac{\partial}{\partial \theta} - \frac{\partial}{\partial \varepsilon} \right).
\]
Finally, using that 
\[
\frac{\partial \phi^X}{\partial \varepsilon} = F \circ \phi^X,
\]
we find
\[
\delta U = \overrightarrow U + \overleftarrow U = (F \circ \phi^X) \,d\varepsilon \otimes \frac{\partial}{\partial \varepsilon} + \left(\frac{\partial \phi^X}{\partial \theta} - 1 \right) dx \otimes \frac{\partial}{\partial \theta}+ F d\theta \otimes \frac{\partial}{\partial \theta}
\]
 which is readily seen to be a Nijenhuis operator projecting to 
 \[
 \mathcal N = F d\theta \otimes \frac{\partial}{\partial \theta} = F \mathbb I
 \]
 under both $s$ and $t$. It follows that $U : (TM)_{\mathcal N} \to A$ is a Lie agebroid isomorphism as already noticed.
 \end{example}
 
 \section{More Examples}\label{Sec:4}
 
 In this section we discuss some slightly less trivial examples of Lie groupoids integrating a Nijenhuis operator, including their multiplicative Nijenhuis structure.
 
 \subsection{The vertical endomorphism of the tangent bundle}\label{Subsec:5}
 
 Let $M$ be a manifold and let $V \in \Omega^1 (M, TM)$ be an integrable almost tangent structure on $M$. In particular, $V$ is a Nijenhuis operator, and we have a Lie algebroid $(TM)_V \Rightarrow M$. The local model for an integrable almost tangent structure is the vertical endomorphism of the tangent bundle. For simplicity, we assume that $M = TB$ globally for some manifold $B$, and that $V$ is exactly the vertical endomorphism. In this case, the Lie algebroid $(TM)_V \Rightarrow M$ is integrated by a Lie groupoid $G \rightrightarrows M$ (depending on $B$ only) that we now describe. As a manifold $G = TM = TTB$ (the double tangent bundle of $B$). To the best of our knowledge the following groupoid structure on $TTB$ appears here for the first time. In order to describe it, we recall a few properties of the double tangent bundle. First of all, it is a double vector bundle.
 \[
 \begin{array}{c}
 \xymatrix{
 TTB \ar[r]^{\tau'} \ar[d]_-{\tau}& TB \ar[d]\\
   TB \ar[r] & B
 }
 \end{array}.
 \] 
 The vertical projection $\tau : TTB \to TB$ is the usual tangent bundle projection mapping a tangent vector to its base point. The horizontal projection $\tau' : TTB \to TB$ is the tangent to the projection $TB \to B$. We denote by $(+, \cdot)$ the fiber-wise operations (addition and scalar multiplication) of the vector bundle with projection $\tau$, and by $(+', \cdot')$ the fiber-wise operations of the vector bundle with projection $\tau'$. The latter are the tangent to the fiber-wise operations of the vector bundle $TB \to B$. Given local coordinates $z = (z^i)$ on $B$, we denote by $(z, \dot z)$ the associated tangent coordinates on $TB$, and by $(z, \dot z, z', \dot z{}')$ the tangent coordinates on $TTB$ associated to the coordinates $(z, \dot z)$. In these coordinates we have
 \[
 \tau (z, \dot z, z', \dot z{}') = (z, \dot z), \quad \tau' (z, \dot z, z', \dot z{}') = (z, z').
 \]
 Additionally
 \[
 (x, \dot x, z', \dot z{}') + (x, \dot x, w', \dot w{}') = (x, \dot x, z' + w', \dot z{}' + \dot w{}'), \quad a \cdot  (x, \dot x, z', \dot z{}') =  (x, \dot x, az', a\dot z{}'), 
 \]
 and
 \[
  (x, \dot z, x', \dot z{}') +' (x, \dot w, x', \dot w{}') = (x, \dot z + \dot w, x', \dot z{}' + \dot w{}'), \quad b \cdot'  (x, \dot z, x', \dot z{}') =  (x, b\dot z, x', b\dot z{}'), 
 \]
 for all $a,b \in \mathbb R$. Finally there is a canonical involution $\varkappa : TTB \to TTB$ swapping the two vector bundle structures. In coordinates
 \[
 \varkappa (z, \dot z, z', \dot z{}') = (z, z', \dot z, \dot z{}').
 \]
 
 We are now ready to describe the Lie groupoid structure on $TTB$ integrating the vertical endomorphism $V$ on $TB$. Source and target $s, t : TTB \to TB$ are given by
 \[
 s(\xi) = \tau(\xi) - \tau' (\xi), \quad t (\xi) = \tau (\xi) + \tau' (\xi).
 \]
 As $\tau (\xi)$ and $\tau' (\xi)$ have the same base point for all $\xi \in TTB$, both $s$ and $t$ are well-defined. In order to define the multiplication
 \[
 m : TTB \mathbin{{}_s\times_t} TTB \to TTB,
 \]
 take $\xi, \zeta \in TTB$ such that $s(\xi ) = t (\zeta)$, and let $\eta \in TTB$ be any vector such that 
 \[
 \tau (\eta) = \tau' (\xi) \quad \text{and} \quad \tau' (\eta) = \tau' (\zeta).
 \]
 We put
 \[
m (\xi , \zeta) = \Big( \xi -' \varkappa (\eta) \Big) + \Big( \zeta +' \eta\Big).
 \]
 In coordinates
 \[
 m\Big((x, \dot z, z', \dot z{}'), (x, \dot w, w', \dot w{}')\Big) = (x, \dot z - w', z' + w', \dot z{}' + \dot w{}')
 \]
 where $\dot z - z' = \dot w + w'$. This shows that $m (\xi, \zeta)$ is independent of the choice of $\eta$. The unit $u : TB \to TTB$ is the zero section of the vertical vector bundle $\tau : TTB \to TB$. Finally, the inversion $i : TTB \to TTB$ is fiber-wise multiplication by $-1$ wrt the vertical vector bundle structure. A direct computation, e.g.~in coordinates, shows that, with these structure maps, $TTB$ is indeed a Lie groupoid over $TB$. Denote by $G \rightrightarrows TB$ this Lie groupoid. We want to show that $G$ integrates the vertical endomorphism on $TB$. To do this we need to describe the Lie algebroid $A \Rightarrow TB$ of $G$. As $\tau : G \to TB$ is a vector bundle projection and $u : TB \to G$ is the zero section of this vector bundle, we have that $A = u^\ast T^s G \cong TTB \oplus_{TB} \dot TTB$. Here $\dot TTB$ denotes the copy of $TTB$ corresponding to the tangent spaces to the $\tau$-fibers at zeros. Clearly, the map
 \[
U := \tfrac{1}{2} \big(V \oplus \mathbb I \big) : TTB \to A
 \]
 is a vector bundle isomorphism. In coordinates
 \[
 U \frac{\partial}{\partial z}|_{(z, \dot z)} = \frac{1}{2} \left( \frac{\partial}{\partial \dot z} 
 + \frac{\partial}{\partial z'} \right)\!|_{(z, \dot z, 0, 0)}, \quad \text{and} \quad U \frac{\partial}{\partial \dot z}|_{(z, \dot z)} = \frac{1}{2} \frac{\partial}{\partial \dot z{}'}|_{(z, \dot z, 0, 0)},
 \]
 i.e.
  \[
 U  = \frac{1}{2} \left(dz \otimes \left( \frac{\partial}{\partial \dot z} 
 + \frac{\partial}{\partial z'} \right)\!|_{M} +  d \dot z \otimes \frac{\partial}{\partial \dot z{}'}|_{M}\right).
 \]
 It remains to show that $U$ identifies the Lie algebroid structure $(TTB)_V$ with that of $A$. Instead of doing this directly, we apply Theorem \ref{theor:delta_U}. First of all, we compute $\overrightarrow U$. It is easy to see that 
 \begin{equation}\label{eq:134}
\overrightarrow{ \left(\frac{\partial}{\partial \dot z} 
 + \frac{\partial}{\partial z'}\right)\!|_M} = \frac{\partial}{\partial \dot z} 
 + \frac{\partial}{\partial z'} \quad \text{and} \quad \overrightarrow{\frac{\partial}{\partial \dot z{}'}|_M} = \frac{\partial}{\partial \dot z{}'}.
 \end{equation}
Now, denote by $V_{TTB} \in \Omega^1 (TTB, TTTB)$ the vertical endomorphism on $TTB$. It follows from (\ref{eq:134}) and the first one of (\ref{right_U_loc}) that
 \[
 \begin{aligned}
 \overrightarrow U & = \frac{1}{2} \left(d(t^\ast z) \otimes \left( \frac{\partial}{\partial \dot z} 
 + \frac{\partial}{\partial z'} \right) +  d (t^\ast \dot z) \otimes \frac{\partial}{\partial \dot z{}'}\right) \\
 & = \frac{1}{2} \left(d z \otimes \left( \frac{\partial}{\partial \dot z} 
 + \frac{\partial}{\partial z'} \right) +  d \big(\dot z + z'\big) \otimes \frac{\partial}{\partial \dot z{}'}\right) \\
 & = \frac{1}{2} \Big( \varkappa_\ast (V_{TTB}) + V_{TTB}\Big).
 \end{aligned}
 \]
 which clearly fulfills both conditions \emph{(1)} and \emph{(2)} in Theorem \ref{theor:delta_U}. Similarly
 \[
 \overleftarrow U = \frac{1}{2}\Big( \varkappa_\ast (V_{TTB}) - V_{TTB}\Big).
 \]
 Hence
 \[
 \delta U = \overrightarrow U + \overleftarrow U = \varkappa_\ast \big(V_{TTB}\big)
 \]
 which is a Nijenhuis operator projecting on $V_{TB}$ along both $s, t : TTB \to TB$. Using Theorem \ref{theor:delta_U} we conclude that $U$ identifies the Lie algebroid structure $(TTB)_V$ with that of $A$ as claimed.
 
 \subsection{Integrable projections}
 
 A \emph{projection} on a manifold $M$ is a $(1,1)$ tensor $P$ such that $P^2 = P$. It follows that $V := \operatorname{im} P$ and $H := \ker P$ are regular distributions (the \emph{vertical} and \emph{horizontal} distributions) such that $TM = V \oplus H$. A projection $P$ is \emph{integrable} if it is additionally a Nijenhuis operator. In this case both $V$ and $H$ are involutive distributions. Let $P$ be an integrable projection on $M$. For simplicity, we assume that the foliation integrating the vertical distribution $V$ is simple, i.e.~the leaf space $B$ is a manifold, and the natural projection $\pi: M \to B$ is a surjective submersion. In this case $V = T^\pi M$ the $\pi$-vertical tangent bundle, and $H$ is a flat Ehresmann connection on the fibration $\pi : M \to B$. Locally, we can choose fibered coordinates $(x^i, u^\alpha)$ on $M$ such that 
 \[
 P = du^\alpha \otimes \frac{\partial}{\partial u^\alpha}.
 \]
 In the following, we will always use $\pi$ to identify $H$ with the pull-back vector bundle $\pi^\ast TB$ in the obvious way. This vector bundle carries a natural representation of the Lie algebroid $T^\pi M \Rightarrow M$ (where the anchor is the inclusion $T^\pi M \hookrightarrow TM$ and the bracket is the commutator of vector fields tangent to fibers of $\pi$). This representation is given by the \emph{Bott connection}: the unique $T^\pi M$-connection in $H = \pi^\ast TB$ such that all pull-back sections are parallel. The Lie algebroid $(TM)_P \Rightarrow M$ of the integrable projection $P$ is easily seen to be isomorphic to the semi-direct product Lie algebroid $T^\pi M \ltimes H = T^\pi M \times_B TB \Rightarrow M$ under
 \begin{equation}\label{eq:U_P}
 TM \to T^\pi M \times_B TB, \quad v \mapsto (Pv, \pi_\ast v).
 \end{equation}
 In order to illustrate our main result, we now prove the Lie algebroid isomorphism $(TM)_P \cong T^\pi M \ltimes H$ using Theorem \ref{theor:delta_U}. To do this we need to fix a Lie groupoid integrating $T^\pi M \ltimes H$. First we choose an integration of the Lie algebroid $T^\pi M$. The easiest choice is the \emph{submersion groupoid} $M \times_B M$ whose structure maps are
 \[
 \begin{aligned}
 s(x,y) = x&, \quad t(x, y) = y, \quad m((x,y),(z,x)) = (z,y), \\ & u(x) = (x,x), \quad i(x,y) = (y,x).
 \end{aligned}
 \]
 The vector bundle $H = \pi^\ast TB$ carries a canonical representation of the submersion groupoid integrating the Bott connection given by
 \[
 (x, y).(x, v) = (y, v), \quad \text{for all $(x, y) \in M \times_B M$, and $v \in T_{\pi (x) = \pi(y)} B$}.
 \]
 It follows that the semidirect product Lie groupoid $(M \times_B M) \ltimes H = M \times_B M \times_B TB \rightrightarrows M$ integrates the Lie algebroid $T^\pi M \ltimes H = T^\pi M \times_B TB \Rightarrow M$. The structure maps in $M \times_B M \times_B TB$ are
 \begin{equation}\label{eq:sm_rep}
 \begin{aligned}
 s(x,y, v) = x&, \quad t(x, y, v) = y, \quad m((x,y, v),(z,x, w)) = (z,y, v + w), \\ & u(x) = (x,x, 0_x), \quad i(x,y, v) = (y,x, -v).
 \end{aligned}
 \end{equation}
 Notice that $T^\pi M \times_B TB \Rightarrow M$ identifies with the Lie algebroid 
 \[
 A \subseteq T\big( M \times_B M \times_B TB\big) = TM \times_{TB} TM \times_{TB} TTB
 \]
  of $M \times_B M \times_B TB$ under the inclusion
 \[
 T^\pi M \times_B TB \hookrightarrow TM \times_{TB} TM \times_{TB} TTB, \quad  (\xi, v) \mapsto \big(0_x, \xi, v_{0_{\pi(x)}}^\uparrow\big) 
 \]
where $x = \tau (\xi)$. Under this inclusion, the map (\ref{eq:U_P}) becomes the vector bundle isomorphism
 \[
 U : TM \to A, \quad \xi \mapsto \big(0_x, P\xi, (\pi_\ast \xi)_{0_{\pi(x)}}^\uparrow\big).
 \]
 A straightforward computation using this and the structure maps (\ref{eq:sm_rep}) shows that the $(1,1)$ tensors
 \[
 \overrightarrow U , \overleftarrow U : TM \times_{TB} TM \times_{TB} TTB \to TM \times_{TB} TM \times_{TB} TTB
 \]
 are given by
 \[
 \overrightarrow U (\xi, \eta, W) = \big(0_x, P\eta, (\pi_\ast \xi)^\uparrow_w\big), \quad \text{and} \quad  \overleftarrow U (\xi, \eta, W) = \big(P\xi, 0_y, - (\pi_\ast \xi)^\uparrow_w\big)
 \]
 where $(\xi, \eta, W)$ is a tangent vector at the point $(x, y, w) \in M \times_B M \times_B TB$. The coordinates $(x^i, u^\alpha)$ on $M$ induce coordinates $(x^i, u^\alpha_1, u^\alpha_2, \dot x{}^i)$ on $M \times_B M \times TB$, where $(u^\alpha_1)$ (resp.~$(u^\alpha_2)$) are fiber coordinates on the first (resp.~second) factor, and $(\dot x{}^i)$ are fiber tangent coordinates on the last factor. In these coordinates
 \[
 \overrightarrow U = du^\alpha_2 \otimes \frac{\partial}{\partial u^\alpha_2} + dx^i \otimes \frac{\partial}{\partial \dot x{}^i}, \quad \text{and} \quad \overleftarrow U = du^\alpha_1 \otimes \frac{\partial}{\partial u^\alpha_1} - dx^i \otimes \frac{\partial}{\partial \dot x{}^i}.
 \]
 which obviously satisfy conditions \emph{(1)} and \emph{(2)} of Theorem \ref{theor:delta_U} (alternatively one can check that the $A$-torsion of $U$ vanishes identically and then use Remark \ref{rem:A-torsion}). Finally
 \[
 \delta U = \overrightarrow U + \overleftarrow U : TM \times_{TB} TM \times_{TB} TTB \to TM \times_{TB} TM \times_{TB} TTB
 \]
 is given by
 \[
  \delta U (\xi, \eta, W) = (P\xi, P\eta, 0)
 \]
 which projects onto $P$ under both the source and the target, as claimed (equivalently $\rho_A \circ U = P)$.
 
 \subsection{Pre-Lie algebras}
 
 A \emph{pre-Lie algebra} (aka \emph{left symmetric algebra}) is a vector space $\mathfrak a$ equipped with a bilinear map
 \[
 \triangleright : \mathfrak a \times \mathfrak a \to \mathfrak a, \quad (a, b) \mapsto a \triangleright b
 \]
 such that
 \[
 (a \triangleright b) \triangleright c - a \triangleright (b \triangleright c) = (b \triangleright a) \triangleright c + b \triangleright (a \triangleright c), \quad \text{for all $a,b,c \in \mathfrak a$}.
 \]
 In other words the \emph{associator} of $\mathfrak a$ is symmetric in the first to entries. Associative algebras are instances of pre-Lie algebras. If $(\mathfrak a, \triangleright)$ is a pre-Lie algebra, then the \emph{commutator}:
 \[
 [-,-]_\triangleright : \mathfrak a \times \mathfrak a \to \mathfrak a, \quad (a, b) \mapsto [a, b]_\triangleright
 \]
 is a Lie bracket. We will denote by $\mathfrak a_{\mathrm{Lie}}$ the Lie algebra $(\mathfrak a, [-,-]_\triangleright)$. The Lie algebra $\mathfrak a_{\mathrm{Lie}}$ comes with a canonical representation $L : \mathfrak a_{\mathrm{Lie}} \to \mathfrak{gl} (\mathfrak a)$ on $\mathfrak a$ itself given by
 \[
 L(a) (x) = a \triangleright x, \quad \text{for all $a, x \in \mathfrak a$}.
 \]
 
 Let $\mathfrak a$ be a finite dimensional real pre-Lie algebra. Then $\mathfrak a$ can be seen as a manifold. For any $v \in \mathfrak a$, we denote by $a^\uparrow \in \mathfrak X (\mathfrak a)$ the constant vector field equal to $a$ and, for any endomorphism $\phi : \mathfrak a \to \mathfrak a$, we denote by $X_\phi \in \mathfrak X (\mathfrak a)$ the only vector field such that $[X_\phi, a^\uparrow] = \phi (a)^\uparrow$ for all $a \in \mathfrak g$ (this is consistent with our notation in Section \ref{Sec:2}). There is a canonical $(1,1)$ tensor $\mathcal N$ on $\mathfrak a$ given by
 \[
 \mathcal N a^\uparrow = X_{L(a)}, \quad \text{for all $a \in \mathfrak a$},
 \]
Equivalently
\[
\mathcal N a_x^\uparrow = - a \triangleright x, \quad \text{for all $a, x \in \mathfrak a$}. 
\]
The $(1,1)$ tensor $\mathcal N$ is always a Nijenhuis operator and every Nijenhuis operator on a vector space whose components are linear in linear coordinates arises in this way \cite{BKM2022, K2019}. We remark that our Nijenhuis operator $\mathcal N$ differs in sign from that in \cite{BKM2022, K2019}. Our sign conventions make it easier the description of the associated Lie algebroid.

We will describe the source-simply connected Lie groupoid integrating $\mathcal N$. We begin describing $(T\mathfrak a)_{\mathcal N}$. For all $a, b \in \mathfrak a$ we have
\[
\begin{aligned}
{}[a^\uparrow, b^\uparrow]_{\mathcal N} & = [\mathcal N a^\uparrow, b^\uparrow] + [a^\uparrow, \mathcal N b^\uparrow] = [X_{L(a)}, b^\uparrow] + [a^\uparrow, X_{L(b)}] \\ 
&= L(a)(b) - L(b)(a) = [a, b]_\triangleright.
\end{aligned}
\]
The latter equation, together with the anchor being $\mathcal N$, determines the Lie algebroid $(T\mathfrak a)_{\mathcal N}$ completely. Namely, it is immediate that $(T\mathfrak a)_{\mathcal N}$ is isomorphic to the action Lie algebroid $A := \mathfrak a_{\mathrm{Lie}} \ltimes \mathfrak a \Rightarrow \mathfrak a$ via
\[
U : T\mathfrak a \to \mathfrak a \times \mathfrak a, \quad v^\uparrow_x \to (v, x).
\]

Again, in order to illustrate the main result of the paper, we prove this straightforward fact taking a longer route. So, let $G$ be the simply connected Lie group integrating $\mathfrak a_{\mathrm{Lie}}$. The pre-Lie algebra structure on $\mathfrak a$ induces a left invariant affine structure on $G$ and every source-simply connected Lie group with a left invariant affine structure arises in this way (see, e.g.~\cite{B1996}). We will not really use this affine structure, but we will need the $G$-action $\mathcal L : G \times \mathfrak a \to \mathfrak a$ on $\mathfrak a$ integrating the Lie algebra action $L$.

\begin{lemma}\label{lem:final}
Let $(\mathfrak a, \triangleright)$ be a finite dimensional, real pre-Lie algebra and let $G$ be the source simply-connected Lie group integrating $\mathfrak a_{\mathrm{Lie}}$. Then the action $\mathcal L$ satisfies
\begin{equation}\label{eq:calL}
\mathcal L_g (x \triangleright y) = \operatorname{ad}_g x \triangleright \mathcal L_g y, \quad \text{for all $x, y \in \mathfrak a$}.
\end{equation}
\end{lemma}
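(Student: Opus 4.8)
The plan is to recast \eqref{eq:calL} as an \emph{equivariance} property of the pre-Lie product and then reduce it, using connectedness of $G$, to its infinitesimal version, which turns out to be nothing but the pre-Lie identity.

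Concretely, I would consider two smooth finite-dimensional representations of $G$: the action $\mathcal L$ on $\mathfrak a$, and the representation $\ad \otimes \mathcal L$ of $G$ on $\mathfrak a \otimes \mathfrak a$, where $\ad$ denotes the adjoint representation of $G$ on $\mathfrak a = \operatorname{Lie}(G)$ (so that $\ad_g$ is the linear map appearing in the statement). The bilinear map $\triangleright$ may be viewed as a linear map $\mathfrak a \otimes \mathfrak a \to \mathfrak a$, and Identity \eqref{eq:calL} is precisely the assertion that this linear map intertwines the two $G$-actions, i.e.\ $\mathcal L_g \circ \triangleright = \triangleright \circ (\ad_g \otimes \mathcal L_g)$ for all $g\in G$.

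Since $G$ is connected, a linear map between two smooth $G$-representations is $G$-equivariant if and only if it intertwines the derived $\mathfrak a_{\mathrm{Lie}}$-representations. The derived action on $\mathfrak a$ is $L$, i.e.\ $a\mapsto (x\mapsto a\triangleright x)$; the derived action on $\mathfrak a\otimes\mathfrak a$ sends $a$ to $x\otimes y\mapsto [a,x]_\triangleright\otimes y + x\otimes(a\triangleright y)$. Hence the infinitesimal equivariance of $\triangleright$ reads
\[
a\triangleright(x\triangleright y) = [a,x]_\triangleright\triangleright y + x\triangleright(a\triangleright y),\qquad a,x,y\in\mathfrak a,
\]
which, after substituting $[a,x]_\triangleright = a\triangleright x - x\triangleright a$, is exactly the defining identity of a pre-Lie algebra. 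This establishes the infinitesimal statement, and the cited general fact promotes it to \eqref{eq:calL}.

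For readers who prefer an elementary self-contained argument, the same idea can be run by hand: put $E_g(x,y) := \mathcal L_g(x\triangleright y) - \ad_g x\triangleright\mathcal L_g y$, fix $a\in\mathfrak a$, and differentiate along $s\mapsto g\exp(sa)$ using $\tfrac{d}{ds}\mathcal L_{g\exp(sa)}(v) = \mathcal L_{g\exp(sa)}(a\triangleright v)$ and $\tfrac{d}{ds}\ad_{g\exp(sa)}x = \ad_{g\exp(sa)}[a,x]_\triangleright$. Invoking the pre-Lie identity in the displayed form, the mixed terms collapse and one obtains the closed linear ODE $\tfrac{d}{ds}E_{g\exp(sa)}(x,y) = E_{g\exp(sa)}([a,x]_\triangleright,y) + E_{g\exp(sa)}(x,a\triangleright y)$; since $E_e = 0$, uniqueness of solutions gives $E\equiv 0$ on the subgroup generated by $\exp(\mathfrak a)$, hence on all of $G$. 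There is no real obstacle here; the only point requiring attention is bookkeeping — making sure the adjoint representation (not $\mathcal L$) is the one twisting the left argument of $\triangleright$.
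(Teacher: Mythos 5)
Your proof is correct and follows essentially the same route as the paper's: both reduce \eqref{eq:calL} by connectedness of $G$ to the infinitesimal identity $a\triangleright(x\triangleright y)-[a,x]_\triangleright\triangleright y-x\triangleright(a\triangleright y)=0$, which is precisely the left-symmetry of the associator. Your intertwiner reformulation and the ODE for $E_{g\exp(sa)}$ merely spell out the standard connectedness argument that the paper's one-line differentiation leaves implicit.
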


\begin{proof}
By connectedness, it is enough to prove that (\ref{eq:calL}) is satisfied at the infinitesimal level. Differentiating $\mathcal L_g (x \triangleright y) - \operatorname{ad}_g x \triangleright \mathcal L_g y$ we find
\[
\dot g \triangleright (x \triangleright y) - [\dot g, x]_\triangleright \triangleright y -  x \triangleright (\dot g \triangleright y) = \dot g \triangleright (x \triangleright y) - (\dot g \triangleright x) \triangleright y + (x \triangleright \dot g)\triangleright y - x \triangleright (\dot g \triangleright y) = 0,
\]
for all $\dot g, x, y \in \mathfrak a$. This concludes the proof.
\end{proof}

We want to show that the action groupoid $G \ltimes \mathfrak a \rightrightarrows \mathfrak a$ corresponding to $\mathcal L$ integrates $\mathcal N$ via Theorem \ref{theor:delta_U}. Recall that the structure maps of $G \ltimes \mathfrak a$ are:
\begin{equation*}
 \begin{aligned}
 s(g, x) = x&, \quad t(g, x) = \mathcal L_g x, \quad m((g, \mathcal L_h x),(h, x)) = (gh, x), \\ & u(x) = (1_G, x), \quad i(g, x) = (g^{-1}, \mathcal L_g x).
 \end{aligned}
 \end{equation*}
The Lie algebroid of $G \times \mathfrak a \to \mathfrak a$ is the action Lie algebroid $A$ that we already considered. We want to show that $U$ satisfies both conditions \emph{(1)} and \emph{(2)} of Theorem \ref{theor:delta_U} and that $s_\ast \delta U = \mathcal N$. This will confirm that $U : (T\mathfrak g)_{\mathcal N} \to A$ is a Lie algebroid isomorphism. 

A straightforward computation that we leave to the reader shows that
\[
\overrightarrow U \big(\overrightarrow{\xi}{}_g, a^\uparrow_x\big) = \left( \big(\overrightarrow{ \mathcal L_g a -\xi \triangleright \mathcal L_g x}\big){}_g, 0_x \right)
\]
and
\[
\overleftarrow U \big(\overrightarrow{\xi}{}_g, a^\uparrow_x\big) = \left( - \big(\overrightarrow{\operatorname{ad}_g a}\big){}_g, - (a \triangleright x)_x^\uparrow\right),
\]
hence
\[
\delta U \big(\overrightarrow{\xi}{}_g, a^\uparrow_x\big) = \left( \big(\overrightarrow{\mathcal L_g a - \operatorname{ad}_g a -  \xi \triangleright \mathcal L_g x}\big){}_g, - (a \triangleright x)_x^\uparrow \right)
\]
for all $(g, x) \in G \ltimes \mathfrak a$, all $\xi \in \mathfrak a_{\mathrm{Lie}}$ and all $a \in \mathfrak a$, where $\overrightarrow{\xi} \in \mathfrak X (G)$ is the right invariant vector field corresponding to $\xi$. One can show that $\overrightarrow U$ is actually a Nijenhuis operator by computing da $A$-torsion of $U$, and then using Remark \ref{rem:A-torsion}. This is easy: for every $a \in \mathfrak a$, denote by $c_a \in \Gamma (A)$ the constant section equal to $a$. We have $[c_a, c_b]_A = c_{[a,b]_{\triangleright}}$ and $\rho_A (c_a) = X_{L(a)}$ for all $a,b \in \mathfrak a$. Additionally, $U a^\uparrow = c_a$. Hence
\[ 
\begin{aligned}
T^A_U (a^\uparrow, b^\uparrow) & = [Ua^\uparrow, Ub^\uparrow]_A - U[\rho_A U a^\uparrow, b^\uparrow] - U [a^\uparrow, \rho_A U b^\uparrow] \\
& = [c_a, c_b]_A - U[\rho_A c_a, b^\uparrow] - U[a^\uparrow, \rho_A c_b] \\
& = c_{[a,b]_\triangleright} - U[X_{L(a)}, b^\uparrow] - U[a^\uparrow, X_{L(b)}] \\
& = c_{[a,b]_\triangleright} - U (a \triangleright b)^\uparrow + U(b \triangleright a)^\uparrow \\
& = c_{[a,b]_\triangleright} - c_{a \, \triangleright \,b} + c_{b \, \triangleright \,a} = 0
\end{aligned}
\]
where we used that any two constant vector fields commute. By linearity we conclude that $T_U^A = 0$.

Finally, it is easy to see that
\[
s_\ast \circ \delta U \big(\overrightarrow{\xi}{}_g, a^\uparrow_x\big) = -(a \triangleright x)_x^\uparrow = \mathcal N a^\uparrow_x = \mathcal N \circ s_\ast \big(\overrightarrow{\xi}{}_g, a^\uparrow_x\big)
\]
i.e.~$\mathcal N$ is exactly the $s$-projection of $\delta U$ (equivalently $\rho_A \circ U = \mathcal N$). As a sanity check we also compute
\[
\begin{aligned}
t_\ast \circ \delta U \big(\overrightarrow{\xi}{}_g, a^\uparrow_x\big) & = \big( (\xi \triangleright \mathcal L_g x - \mathcal L_g a + \operatorname{ad}_g a) \triangleright \mathcal L_g x - \mathcal L_g (a \triangleright x) \big)^\uparrow_{\mathcal L_g x}\\
& = \big( (\xi \triangleright \mathcal L_g x - \mathcal L_g a) \triangleright \mathcal L_g x \big)^\uparrow_{\mathcal L_g x} \\
& = \mathcal N \big(\mathcal L_g a - \xi \triangleright \mathcal L_g x\big)_{\mathcal L_g x}^\uparrow \\
& = \mathcal N \circ t_\ast \big(\overrightarrow{\xi}{}_g, a^\uparrow_x\big),
\end{aligned}
\]
where we also used Lemma \ref{lem:final}. This confirms that $\mathcal N$ is also the $t$-projection of $\delta U$.
 
 \bigskip
 
\textbf{Acknowledgements.} L.V.{} is member of the GNSAGA of INdAM.


\begin{thebibliography}{99}


\bibitem{BKM2022}A.~V.~Bolsinov, A.~Yu.~Konyaev, and V.~S.~Matveev, Nijenhuis Geometry, \emph{Adv.~Math.~}\textbf{394} (2022), 108001; e-print: \texttt{arXiv:1903.04603}.

\bibitem{BKM2020a}A.~V.~Bolsinov, A.~Yu.~Konyaev, and V.~S.~Matveev, Nijenhuis Geometry III: $gl$-regular Nijenhuis operators, (2020); e-print: \texttt{arXiv:2007.09506}.

\bibitem{BKM2020}A.~V.~Bolsinov, A.~Yu.~Konyaev, and V.~S.~Matveev, Applications of Nijenhuis geometry: Nondegenerate singular points of Poisson-Nijenhuis structures, \emph{Eur.~J.~Math.~}(2020), doi:10.1007/s40879-020-00429-6; e-print: \texttt{arXiv:2001.04851}.

\bibitem{BKM2021}A.~V.~Bolsinov, A.~Yu.~Konyaev, and V.~S.~Matveev, Applications of Nijenhuis geometry II: maximal pencils of multihamiltonian structures of hydrodynamic type, \emph{Nonlinearity} \textbf{34} (2021), 5136--5162; e-print: \texttt{arXiv:2009.07802}.

\bibitem{BKM2021b}A.~V.~Bolsinov, A.~Yu.~Konyaev, and V.~S.~Matveev, Applications of Nijenhuis geometry III: Frobenius pencils and compatible non-homogeneous Poisson structures, (2021); e-print: \texttt{arXiv:2112.09471}.

\bibitem{B1996}D.~Burde, Affine structures on nilmanifolds, \emph{Int.~J.~Math.~}\textbf{7} (1996), 599--616.
 
 \bibitem{BD2013}H.~Bursztyn, and T.~Drummond, Lie groupoids and the Fr\"olicher-Nijenhuis bracket, \emph{Bull.~Braz.~Math.~Soc., New Series} \textbf{44} (2013), 709--730; e-print: \texttt{arXiv:1706.00870}.

\bibitem{BD2019}H.~Bursztyn, and T.~Drummond, Lie theory of multiplicative tensors, \emph{Mat.~Ann.~}\textbf{375} (2019), 1489--1554; e-print: \texttt{arXiv:1705.08579}.

\bibitem{BDN2021}H.~Bursztyn, T.~Drummond, and C.~Netto, Dirac structures and Nijenhuis operators, (2021); e-print: \texttt{arXiv:2109.06330}.

\bibitem{CGND2004}J.~Clemente-Gallardo, J.~M.~Nunes da Costa, Dirac-Nijenhuis structures, \emph{J.~Phys.~A }\textbf{37} (2004), 7267--7296; e-print: \texttt{arXiv:}.

\bibitem{CF2011}M.~Crainic, and R.~Loja Fernandes, Lectures on integrability of Lie brackets, \emph{Geom.~Topol.~Mon.~}\textbf{17} (2011), 1--107; e-print: \texttt{arXiv:math/0611259}.

\bibitem{PMCTII}M.~Crainic, R.~Loja Fernandes, and D.~Martinez-Torres, Regular Poisson manifolds of compact types, \emph{Ast\'erisque} \textbf{413} (2019), viii + 154pp.; e-print: \texttt{arXiv:1603.00064}.

\bibitem{CMS2018}M.~Crainic, J.~Nuno Mestre, and I.~Struchiner, Deformations of Lie groupoids, \emph{Int.~Math.~Res.~Notices} 2020, 7662--7746; e-print: \texttt{arXiv:1510.02530}.

\bibitem{CS2015}M.~Crainic, M.~A.~Salazar, Jacobi structures and Spencer operators, \emph{J.~Math.~Pures Appl.~}\textbf{103} (2015), 505--521; e-print: \texttt{arXiv:arXiv:1309.6156}.

\bibitem{D2019}A.~Das, Poisson-Nijenhuis groupoids, \emph{Rep.~Math.~Phys.~}\textbf{84} (2019) 303--331; e-print: \texttt{arXiv:1709.08168}.

\bibitem{D2020}T.~Drummond, Lie-Nijenhuis bialgebroids, (2020); e-print: \texttt{arXiv:2004.10900}.

\bibitem{DE2019}T.~Drummond, and L.~Egea, Differential forms with values in VB-groupoids, \emph{J.~Geom.~Phys.~}\textbf{135} (2019), 42--69; e-print: \texttt{arXiv:1804.05289}.


\bibitem{G2013}J.~Grabowski, Graded contact manifolds and contact Courant algebroids, \emph{J.~Geom.~Phys.~}\textbf{68} (2013), 27--58; e-print: \texttt{arXiv:112.0759}.

\bibitem{K1987}M.~Karasev, Analogues of objects of the theory of Lie groups for nonlinear Poisson brackets, \emph{USSR Izv.~}\textbf{28} (1987), 497--527.

\bibitem{KS1993}Y.~Kerbrat, and Z.~Souici-Benhammadi, Vari\'et\'es de Jacobi et groupoides de contact, \emph{C.~R.~Acad.~Sci.~Paris S\'er.~I Math.~}\textbf{317} (1993), 81--86.

\bibitem{K2019}A.~Yu.~Konyaev, Nijenhuis geometry II: Left-symmetric algebras and linearization problem for Nijenhuis operators, \emph{Diff.~Geom.~Appl.~}\textbf{74} (2021), 101706; e-print: \texttt{arXiv:1903.06411}.

\bibitem{KS2016}Y.~Kosmann-Schwarzbach, Multiplicativity, from Lie groups to generalized geometry, in: Geometry of jets and fields, in honour of Prof.~Janusz Grabowski, 131--166, \emph{Banach Center Publ.~}\textbf{110}, Polish Acad.~Sci.~Inst.~Math., Warsaw, 2016.

\bibitem{KS2017}Y.~Kosmann-Schwarzbach, Beyond recursion operators, in: P.~Kielonowski, A.~Odzijewicz, and E.~Previato (eds), Proceedings of the XXXVI Workshop on Geometric Methods in Physics, Bia\l owie\.{z}a, Poland,
July 2017. Birkhauser (2019); e-print: \texttt{arXiv:1712.08908}.

\bibitem{LSX2009}C.~Laurent-Gengoux, M.~Sti\'enon, and P.~Xu, Integration of Holomorphic Lie algebroids, \emph{Mat.~Ann.~}\textbf{345} (2009), 895--923; e-print: \texttt{arXiv:0803.2031}.

\bibitem{L1993}P.~Libermann, On symplectic and contact groupoids, in: \emph{Diff.~Geom.~Appl.~}\textbf{29}, Proc.~Conf.~Opava (Czechoslovakia), August 24--28, 1992, Silesian University, Opava, 1993, 29--45.

\bibitem{mackenzie} K.~C.~H.~Mackenzie, General theory of Lie groupoids and algebroids, \emph{Cambridge Univ.~Press}, Cambridge, 2005.

\bibitem{M2006} R.~A.~Mehta, Supergroupoids, double structures, and equivariant cohomology,
Ph.D. thesis, University of California, Berkeley, 2006, Chapter 2; e-print: \texttt{arXiv:math.DG/0605356}.

 \bibitem{M2013} R.~A.~Mehta, Differential graded contact geometry and Jacobi structures, \emph{Lett.~Math.~Phys.~}\textbf{103} (2013) 729--741; e-print: \texttt{arXiv:1111.4705}.
 
 \bibitem{PSV2021}F.~Pugliese, G.~Sparano, and L.~Vitagliano, Multiplicative Connections and their Lie theory, \emph{Commun.~Contemp.~Math.~}(2021), 2150092 (36 pages); e-print: \texttt{arXiv:2011.04597}.
 
 \bibitem{PSV2021b}F.~Pugliese, G.~Sparano, and L.~Vitagliano, Fiber-wise linear differential operators, \emph{Forum Math.~}\textbf{33} (2021), 1445--1469; e-print: \texttt{arXiv:2011.13192}.
  
 \bibitem{R2002} D.~Roytenberg, On the structure of graded symplectic super manifolds and Courant algebroids, in: Quantization, Poisson brackets and beyond, \emph{Contemp.~Math.~}\textbf{315}, Amer.~Math.~Soc., Providence, RI, 2002, pp.~169--185; e-print: arXiv:math/0203110 [math.SG].
 
 \bibitem{V2016}L.~Vitagliano, Vector Bundle Valued Differential Forms on $\mathbb NQ$-manifolds, \emph{Pacific J.~Math.~}\textbf{283} (2016), 449--482; e-print: \texttt{arXiv:1406.6256}.
 
 \bibitem{W1983} A.~Weinstein, The local structure of Poisson manifolds, \emph{J.~Diff.~Geom.~}\textbf{18} (1983), 523--557.
 
 \bibitem{W1987}A.~Weinstein, Symplectic groupoids and Poisson manifolds, \emph{Bull.~Amer.~Math.~Soc.~(N.S.)} \textbf{16} (1987), 101--104.
 
\end{thebibliography}
\end{document}